\newtheorem{theorem}{Theorem}[section]
\theoremstyle{definition}
\newtheorem{example}{Example}[section]
\newtheorem{problem}{Problem}[section]
\newtheorem{conjecture}{Conjecture}[section]
\newtheorem{remark}{Remark}[section]
\numberwithin{equation}{section}
\begin{document}
    \title{Local antimagic chromatic number of partite graphs }

\author {Pavithra Celeste R}
\address{Pavithra Celeste R, Department of Mathematics, Amrita Vishwa Vidyapeetham, 
 Amritapuri-\textnormal{690525}, India.} 
 \email{pavithracelester@am.amrita.edu} 
 
\author{A V Prajeesh$^{1}$}
\address{A V Prajeesh, Department of Mathematics, Amrita Vishwa Vidyapeetham, 
 Amritapuri-\textnormal{690525}, India.} 
 \email{prajeeshav@am.amrita.edu}

 \author{Sarath V S}
 \address{Sarath V S, Department of Mathematics, Amrita Vishwa Vidyapeetham, 
 Amritapuri-\textnormal{690525}, India. }
\email{writetovssarath@gmail.com}

\subjclass[2010]{Primary 05C78 \\ $^1$Corresponding Author}

\keywords{local antimagic coloring, local antimagic chromatic number, complete bipartite graph, complete tripartite graph}

\begin{abstract}
 Let $G$ be a connected graph with $|V| = n$ and $|E| = m$. A bijection $f:E\rightarrow \{1,2,...,m\}$ is called a local antimagic labeling of $G$ if for any two adjacent vertices $u$ and $v$, $w(u) \neq w(v)$, where $w(u) = \sum_{e \in E(u)}f(e)$, and $E(u)$ is the set of edges incident to $u$. Thus, any local antimagic labeling induces a proper vertex coloring of $G$ where the vertex $v$ is assigned the color $w(v)$. The local antimagic chromatic number is the minimum number of colors taken over all colorings induced by local antimagic labelings of $G$. Let $m,n > 1$. In this paper, the local antimagic chromatic number of a complete tripartite graph $K_{1,m,n}$, and $r$ copies of a complete bipartite graph $K_{m,n}$ where $m \not \equiv n \bmod 2$ are determined.
\end{abstract}

\thanks{}

\maketitle
\pagestyle{myheadings}
\markboth{Pavithra Celeste R $et.al.$ }{Local antimagic chromatic number of partite graphs}

\section{Introduction}
\par In this paper, we consider only simple and finite graphs. For all graph-theoretic terminology and notation, we refer to Bondy and Murty \cite{MR2368647}.

 \par Antimagic labeling of graphs is one of the oldest graph labeling problems available in the literature. Hartsfield and Ringel introduced the concept of antimagic labeling of graphs in \cite{hartsfield}. Let $G=(V,E)$ be a graph and let $f$ be a bijection from $E \rightarrow \{1,2,...,|E|\}$. For each vertex $u \in V,$ the weight of $u$, $w(u) = \sum_{e \in E(u)}f(e)$, where $E(u)$ is the set of edges incident to $u$. If $w(u) \neq w(v)$ for any two distinct vertices $u\; \& \;v \in V$, then $f$ is called an antimagic labeling of $G$. 

 Hartsfield and Ringel posted the following conjectures in \cite{hartsfield}.

\begin{conjecture} \cite{hartsfield}
 Every connected graph other than $K_2$ is antimagic.
\end{conjecture}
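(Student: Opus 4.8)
The plan is to attack the Hartsfield--Ringel conjecture directly: for every connected graph $G \neq K_2$ with $|E(G)| = m$, construct a bijection $f : E(G) \to \{1, 2, \dots, m\}$ whose induced weights $w(v) = \sum_{e \in E(v)} f(e)$ are pairwise distinct over \emph{all} vertices. First I would fix a vertex ordering $v_1, \dots, v_n$ sorted by degree, $\deg(v_1) \le \cdots \le \deg(v_n)$, and aim to force the induced weights into the same order, $w(v_1) < \cdots < w(v_n)$, which makes distinctness automatic. The guiding intuition is that a vertex of larger degree can absorb more total label weight, so by steering the largest labels toward edges incident to high-degree vertices one should be able to make the weight sequence strictly monotone, and hence injective.

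Second, I would make this quantitative through a greedy exchange argument. Process the labels in decreasing order $m, m-1, \dots, 1$, and at each step assign the current largest remaining label to an edge chosen so as to enlarge the gap between the two vertices that are presently closest in weight; equivalently, I would phrase the construction as the optimization problem of maximizing the minimum pairwise weight gap and argue the optimum is strictly positive whenever $G \neq K_2$. A cleaner variant passes through a spanning tree $T$ of $G$: first antimagically label $T$ (trees other than $K_2$ are widely believed, and in many cases known, to be antimagic), and then extend the labeling to the remaining $m - (n-1)$ edges one at a time, each time checking that some available label can be inserted without producing a weight collision. This reduces the whole conjecture to a feasibility statement about inserting the extra labels, which one could hope to settle by a counting or matching argument (each new edge rules out only finitely many labels, so a valid choice survives when $m$ is large enough relative to the number of forbidden values).

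The hard part --- and the reason the conjecture has resisted proof since \cite{hartsfield} --- is controlling the \emph{global} injectivity of the weights, rather than the merely local (adjacent-pairs) distinctness that is the subject of the present paper. Two \emph{nonadjacent} vertices can easily collide, and the greedy gap-maximization above carries no obvious monovariant ruling out such distant collisions, precisely among the many small-degree vertices, where degree-$1$ and degree-$2$ vertices leave the labeling almost no freedom. A genuine proof would have to combine a structural reduction (a block or ear decomposition, or conditioning on a carefully chosen spanning tree) with either a probabilistic deletion-and-repair scheme or a Combinatorial-Nullstellensatz-type nonvanishing argument certifying that a collision-free assignment exists. I expect the decisive obstacle to be exactly this step: upgrading the heuristic ``high-degree vertices can be separated'' into a uniform argument that \emph{simultaneously} separates all the low-degree vertices, and it is on this uniform separation that any complete resolution of the conjecture must ultimately turn.
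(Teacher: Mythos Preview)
The statement you are attempting to prove is a \emph{conjecture}, not a theorem: the paper merely quotes the Hartsfield--Ringel antimagic conjecture from \cite{hartsfield} as motivation and does not claim or supply a proof. There is therefore nothing in the paper to compare your proposal against. The conjecture remains open in general.

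Your write-up is not a proof and you essentially say so yourself. What you have is a programme: order vertices by degree, steer large labels toward high-degree vertices, pass through a spanning tree, and extend. Each of these steps is a well-known heuristic in the antimagic literature, and each breaks down at exactly the places you identify. The tree reduction presupposes Conjecture~1.2 (antimagic trees), itself open; the greedy gap-maximization has no invariant that survives the insertion of the non-tree edges; and the ``finitely many forbidden labels'' counting argument fails because a single new edge can shift the weights of its two endpoints and thereby create collisions with \emph{any} other vertex, not just with each other, so the number of forbidden labels is not bounded independently of $n$. Your final paragraph correctly diagnoses the obstruction---simultaneous separation of all low-degree vertices---but diagnosing it is not the same as overcoming it.

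In short: there is no proof in the paper to match, and your proposal is an outline of known partial strategies together with an honest acknowledgment of why they do not close. If you intend to submit this as a proof, it is not one; if you intend it as a discussion of the conjecture's difficulty, it is reasonable but should not be labelled a proof attempt.
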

\begin{conjecture} \cite{hartsfield}
  Every tree other than $K_2$ is antimagic.
\end{conjecture}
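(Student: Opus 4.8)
The statement asserts that every tree $T$ on at least three vertices admits a bijection $f:E(T)\to\{1,\dots,|E(T)|\}$ for which the induced weights $w(u)=\sum_{e\in E(u)}f(e)$ are pairwise distinct; equivalently, that every tree other than $K_2$ is antimagic. This is a long-standing open problem, so what follows is a proposal for an attack rather than a complete route. The plan is to stratify trees by the number of vertices of degree $2$, since these are the only vertices that genuinely resist a naive construction: any two leaves sharing a neighbour already receive the distinct labels of their pendant edges, and a vertex of large degree has a large weight that is easy to separate from the others.

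\emph{Step 1 (the easy extremes).} First I would dispose of the two ends of the spectrum. When $T$ has no vertex of degree $2$, one builds a labeling directly: process the vertices in a rooted BFS/DFS order from a leaf and feed the currently largest unused labels to the edges around high-degree vertices, using that a vertex of degree $d\ge 3$ carrying the top $d$ labels outweighs everything seen before it; this runs along the lines of the known theorem on trees with at most one vertex of degree $2$. At the other extreme, for the path $P_n$ with $n\ge 4$, an explicit ``zig-zag'' labeling that alternately assigns the smallest and the largest remaining labels along the path makes the sequence of internal weights strictly monotone and keeps the two leaf weights outside that range, which I would confirm by a short direct computation.

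\emph{Step 2 (suppression of degree-$2$ vertices).} For a general $T$, let $\widehat T$ be the homeomorphic reduction obtained by contracting every maximal internal path of degree-$2$ vertices to a single edge; $\widehat T$ has no degree-$2$ vertex, so by Step 1 it carries a good labeling. The core of the argument is to \emph{re-expand} each contracted path $v_0,v_1,\dots,v_t$, with $v_0,v_t$ the surviving branch vertices, by distributing a consecutive block of labels along it so that (a) the new internal weights $f(v_{i-1}v_i)+f(v_iv_{i+1})$ are mutually distinct and distinct from every branch-vertex weight, and (b) the perturbation of the weights of $v_0$ and $v_t$ is controlled. A monotone block assignment as in Step 1 handles (a) internally; the delicate part is coordinating the blocks chosen for different paths and the shifts they induce at the branch vertices so that nothing collides globally.

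\emph{Main obstacle.} Part (b), the global bookkeeping, is exactly where the difficulty concentrates and why the conjecture remains open: a branch vertex of degree $3$ may sit among three long suppressed paths, its weight depends on one label from each of their blocks, and keeping all such branch weights simultaneously large, pairwise distinct, and disjoint from the many moderate path weights is not obviously always achievable. A realistic fallback is to settle restricted families — caterpillars, spiders, trees in which every path of degree-$2$ vertices has bounded length — where this coordination collapses, and to extract the precise combinatorial inequality on label blocks whose general solvability would imply the full conjecture.
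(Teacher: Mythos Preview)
The paper does not prove this statement; it records it as one of the Hartsfield--Ringel conjectures and moves on to results about local antimagic chromatic numbers of bipartite and tripartite graphs. There is therefore no proof in the paper to compare your proposal against.

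As you yourself acknowledge, your write-up is not a proof either. Step~1 is reasonable and in line with known partial results, but Step~2 contains the entire difficulty and you have not resolved it: the ``global bookkeeping'' in part~(b) is precisely the obstruction, and you state as much under \emph{Main obstacle}. A concrete failure point is that re-expanding a contracted path changes the weight of each endpoint by exactly one label from the assigned block, so the branch-vertex weights after expansion are sums of one label from each incident block; arranging these sums to be pairwise distinct and to miss every internal path weight is a simultaneous system of distinctness constraints for which your monotone block assignment gives no control. The fallback families you list (caterpillars, spiders, bounded subdivision) are indeed the cases where such coordination is known or tractable, but that is already the state of the art rather than progress beyond it. In short: the paper offers nothing to compare to, and your outline correctly locates the hard step without supplying the missing idea.
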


 In 2017, Premalatha et al. \cite{premalatha} introduced the concept of local antimagic labeling of graphs. Let $G=(V, E)$ be a graph of order $n$ and size $m$ having no isolated vertices. A bijection $f:E \rightarrow \{1,2,..,m\}$ is called a \textit{local antimagic labeling} of $G$ if for all $uv \in E$ we have $w(u) \neq w(v)$ where, $w(u) = \sum_{e \in E(u)}f(e)$. Here $E(u)$ represents all edges incident on the vertex $u$ of $G$. The local antimagic chromatic number $\chi_{la}(G)$ is defined as the minimum number of colors taken over all colorings of $G$ induced by local antimagic labelings of $G$.

One can easily observe that if a graph $G$ is antimagic, it is locally antimagic. Thus the existence of one such connected graph $G$ except $K_2$, which is not locally antimagic, will disprove the antimagic conjecture. 

 Premalatha et al. posted the following conjectures in \cite{premalatha}.
 
\begin{conjecture} \cite{premalatha}
 Every connected graph other than $K_2$ is local antimagic.
\end{conjecture}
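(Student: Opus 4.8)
Although Conjecture 1.1 (every connected graph other than $K_2$ is antimagic) would immediately imply this statement, it is itself open, so the plan is to exploit the strictly weaker requirement that only \emph{adjacent} vertices need distinct weights. I would proceed by induction on the number of edges $m$, handling the acyclic and the cyclic cases separately since they demand different tools. For a connected graph $G$ containing a cycle, I would delete an edge $e=uv$ lying on that cycle so that $G-e$ stays connected and is not $K_2$, apply the inductive hypothesis to $G-e$ to obtain a local antimagic labeling with labels $\{1,\dots,m-1\}$, and extend it by assigning the top label $m$ to $e$. Since giving $m$ to $e$ raises only $w(u)$ and $w(v)$ (each by $m$), every adjacent inequality avoiding $u$ and $v$ is preserved automatically; only the pairs incident to $u$ or $v$ must be rechecked. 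The hope is that this large, localized perturbation, combined with the freedom to permute the remaining labels, can always be arranged to dodge the finitely many forbidden coincidences, and quantifying that this freedom always suffices is the crux.

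The acyclic case, namely trees, is the genuine base of the whole problem and must be treated directly. Here I would root the tree and label edges in a bottom-up (leaf-to-root) sweep, choosing at each step a label for the pendant edge that keeps the weight of the just-completed vertex distinct from that of its unique neighbour toward the root; because a leaf has a single incident edge, its weight equals that one label, which gives considerable control. For more intricate trees (caterpillars, spiders, subdivided stars) one typically pairs small labels with large labels on the edges at each internal vertex so as to spread the induced weights apart, a technique that has already succeeded for individual tree families in the literature. The task here is to make such a pairing argument uniform across \emph{all} trees.

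For graphs that are neither trees nor amenable to clean decomposition, I would fall back on the algebraic/probabilistic approach: view the edge labels as variables, form the product $P=\prod_{uv\in E}\bigl(w(u)-w(v)\bigr)$, and argue that a uniformly random bijection $f:E\to\{1,\dots,m\}$ makes $P\neq 0$ with positive probability when the minimum degree is large, since each factor vanishes with controllable probability and a union bound (or a Lov\'asz-Local-Lemma, or a Combinatorial-Nullstellensatz refinement) then leaves a valid labeling. This would settle the dense regime and reduce the remaining work to bounded-degree, sparse graphs.

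The main obstacle, which is precisely why the statement remains a conjecture rather than a theorem, is the sparse regime: for graphs of small minimum degree, most acutely long paths, general trees, and subcubic graphs, the probabilistic argument gives no slack, the inductive edge-deletion step can fail because removing an edge may create a degree-one vertex whose weight is then forced, and the bijection constraint blocks a direct application of the Combinatorial Nullstellensatz (which is tailored to independent choices from lists, not to a single global permutation). Bridging the dense methods to these sparse, highly constrained cases, and in particular proving the tree case in full generality, is the hard part, and no uniform argument covering all connected graphs is currently known.
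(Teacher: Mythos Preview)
The statement you are addressing is recorded in the paper only as a \emph{conjecture} quoted from \cite{premalatha}; the paper does not attempt to prove it, and there is no proof in the paper to compare your proposal against. The paper's actual contributions are the determination of $\chi_{la}$ for specific families ($K_{1,m,n}$ and $rK_{m,n}$), all of which presuppose, rather than establish, the existence of some local antimagic labeling.

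Your write-up is candid about its own status: it is a survey of plausible attack lines (edge-deletion induction, leaf-stripping for trees, probabilistic/Nullstellensatz arguments in the dense regime) together with an explicit admission that the sparse case is the bottleneck and that ``no uniform argument covering all connected graphs is currently known.'' That is an accurate description of the difficulty but it is not a proof, and several of the steps you outline would not go through as stated: in the inductive cycle case, assigning the top label $m$ to the reinserted edge can force $w(u)=w(v)$ or create a collision between $u$ (or $v$) and one of its other neighbours, and you give no mechanism to repair this beyond ``the freedom to permute the remaining labels,'' which is exactly the global constraint that makes the problem hard. For the record, the conjecture was settled affirmatively by Haslegrave (\emph{J.\ Graph Theory} 2018) via a short probabilistic argument; the paper under review does not cite this resolution, but if your aim is to supply a proof, that is the reference to consult rather than the inductive/constructive program you sketch.
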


\begin{conjecture} \cite{premalatha}
  Every tree other than $K_2$ is local antimagic.
\end{conjecture}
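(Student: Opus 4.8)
The claim is that every tree $T$ with at least two edges is local antimagic; that is, there is a bijection $f\colon E(T)\to\{1,\dots,m\}$, $m=|E(T)|$, with $w(u)\ne w(v)$ whenever $uv\in E(T)$. Since any antimagic labeling is in particular local antimagic, the statement would follow from the tree case of the classical antimagic conjecture, but that is itself open, so one wants a direct construction. The natural plan is induction on $m$, with stars forming the base: if $T=K_{1,m}$ with $m\ge 2$, any bijection works, since the leaves receive the distinct single-edge values $1,\dots,m$ and the centre receives $\binom{m+1}{2}>m$, while no two leaves are adjacent.

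For the inductive step, suppose $T$ is not a star, fix a longest path $v_0v_1\cdots v_k$ (here $k\ge 3$, since non-stars have diameter at least $3$), and put $u=v_1$. By maximality of the path, every neighbour of $u$ other than $v_2$ is a leaf; let $L$ be the set of these leaves, $d=|L|\ge 1$, and $T'=T-L$, which is a tree with $m'=m-d\ge 2$ edges (it still contains $v_1v_2v_3$). By the induction hypothesis, $T'$ has a local antimagic labeling $f'$, with weights $w'$. Define $f$ on $T$ by keeping $f=f'$ on $E(T')$ and assigning the fresh labels $\{m'+1,\dots,m\}$, in any order, to the $d$ pendant edges at $u$. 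Then every vertex of $T'$ other than $u$ retains its weight, so all those adjacencies stay proper; each leaf of $L$ is adjacent only to $u$ and has weight in $\{m'+1,\dots,m\}$, hence at most $m$, whereas a short estimate gives $w(u)=w'(u)+\sum_{i=m'+1}^{m}i>m$. The construction therefore succeeds as soon as the one remaining inequality, $w(u)\ne w(v_2)$, is secured.

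Securing that inequality is, I expect, essentially the whole difficulty, and it explains why the statement is still only a conjecture. Permuting $\{m'+1,\dots,m\}$ among the pendant edges leaves $w(u)$ unchanged, so there is no slack there; to escape a coincidence $w'(u)+\sum_{i=m'+1}^{m}i=w'(v_2)$ one must either build $f'$ more carefully --- strengthening the induction hypothesis with some control over the weight of the distinguished leaf $v_1$ of $T'$ --- or peel a different pendant configuration, or treat the offending trees by hand. Such coincidences can only be forced for trees that are short on high-degree vertices, namely paths and spiders or caterpillars with long subdivided legs; for those I would replace the inductive step by explicit labelings, using for $P_n$ the standard zig-zag assignment $1,m,2,m-1,3,\dots$, and for a spider a per-leg block of consecutive integers whose parity is chosen to make the internal weights along that leg alternate. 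Putting together the star base case, the pendant-star induction for trees with enough high-degree vertices, and these ad hoc constructions for the degree-two-heavy remainder would complete the proof; pinning down that remainder precisely and carrying the weight bookkeeping through it is the crux --- which is to be expected, since degree-two vertices are the classical bottleneck for antimagic-type labelings.
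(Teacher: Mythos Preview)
The statement you are attempting to prove is listed in the paper as a \emph{conjecture}, quoted from \cite{premalatha}; the paper offers no proof and does not claim one. There is therefore nothing in the paper to compare your argument against.

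As for the argument itself, you have correctly located the obstruction and then left it standing. Your inductive construction reattaches a pendant star at $u=v_1$ using the top block of labels, and the only inequality in doubt is $w(u)\ne w(v_2)$. You observe that permuting the new labels cannot help, and then propose to ``build $f'$ more carefully'' or to handle paths, spiders and caterpillars ``by hand''; but none of this is actually carried out. The difficulty is real: with your scheme $w(u)=f'(v_1v_2)+\sum_{i=m'+1}^{m} i$ and $w(v_2)=w'(v_2)$, and nothing in the unstrengthened induction hypothesis prevents these from coinciding. Your suggested fixes --- a stronger hypothesis controlling the weight at the leaf $v_1$ of $T'$, or explicit labelings for degree-two-heavy trees --- are plausible directions, but each is a nontrivial programme in itself, and the final paragraph does not execute any of them. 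In its present form the proposal is an outline of where a proof might go, together with an honest identification of the gap, rather than a proof; since the paper records the statement as open, that is consistent with the literature as cited there.
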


One can also notice that, for every connected graph $G \neq K_2$, $\chi_{la}(G) \geq \chi(G)$, where $\chi(G)$ is the chromatic number of the graph $G$.

The local antimagic chromatic number of a complete bipartite graph $K_{m,n}$ is completely determined in \cite{premalatha,lau}.

\begin{theorem}\cite{lau}
    $$\chi_{l a}(K_{m, n})= \begin{cases}
    n+1 & \text { if } n>m=1 \\
2 & \text { if } n>m \geq 2 \text { and } m \equiv n (\bmod 2). \\
3 & \text { otherwise. }
\end{cases}$$
\end{theorem}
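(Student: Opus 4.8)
The plan is to treat the three branches of the formula separately, each time proving a matching lower and upper bound. Throughout it is convenient to identify a local antimagic labeling $f$ of $K_{m,n}$, with parts $A=\{a_1,\dots,a_m\}$ and $B=\{b_1,\dots,b_n\}$, with the $m\times n$ matrix whose $(i,j)$ entry is $f(a_ib_j)$: this matrix is a bijection onto $\{1,\dots,mn\}$, the weight of $a_i$ is its $i$-th row sum, the weight of $b_j$ is its $j$-th column sum, and $f$ induces a proper coloring exactly when no row sum coincides with a column sum. I would first dispose of the star $K_{1,n}$ with $n\ge2$: here the matrix is a single row, so leaf $b_j$ receives color $f(a_1b_j)$, the $n$ leaves get the $n$ distinct colors $1,\dots,n$, and the centre $a_1$ has weight $1+2+\cdots+n=\binom{n+1}{2}>n$; hence every local antimagic labeling of $K_{1,n}$ uses exactly $n+1$ colors, so $\chi_{la}(K_{1,n})=n+1$.

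Assume now $m,n\ge2$. As $K_{m,n}$ is connected, bipartite and not $K_2$, we have $\chi_{la}(K_{m,n})\ge\chi(K_{m,n})=2$. Suppose some local antimagic labeling used only two colors; being a proper $2$-coloring of a connected graph, its color classes must be exactly $A$ and $B$, so all row sums equal a single value $W_A$ and all column sums a single value $W_B$. Summing the entries by rows and by columns gives $mW_A=nW_B=\tfrac12 mn(mn+1)$, so $W_A=\tfrac12 n(mn+1)$ and $W_B=\tfrac12 m(mn+1)$; these are integers only if $m\equiv n\pmod2$ (if exactly one of $m,n$ is odd then $mn+1$ is odd and one of the two is non-integral), and properness forces $W_A\ne W_B$, i.e.\ $m\ne n$. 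Hence two colors occur only in the second branch, and $\chi_{la}(K_{m,n})\ge3$ in the remaining ``otherwise'' cases, namely $m=n\ge2$, and $m,n\ge2$ with $m\not\equiv n\pmod2$.

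For the second branch ($n>m\ge2$, $m\equiv n\pmod2$) I would invoke Harmuth's theorem: an $m\times n$ magic rectangle --- an arrangement of $1,\dots,mn$ with all row sums equal and all column sums equal --- exists precisely when $m,n>1$ and $m\equiv n\pmod2$. Using such a rectangle as the label matrix makes $A$ monochromatic of weight $W_A=\tfrac12 n(mn+1)$ and $B$ monochromatic of weight $W_B=\tfrac12 m(mn+1)$, and $W_A\ne W_B$ since $m\ne n$; with $\chi_{la}\ge2$ this yields $\chi_{la}(K_{m,n})=2$.

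It remains to realize exactly three colors in the two ``otherwise'' situations; in each the adjacency constraint forces one part to be monochromatic and the other to split into two classes. (i) If $m\ne n$ and $m\not\equiv n\pmod2$, swap the parts if needed so that $m$ is even (hence $n$ odd); I would then construct an $m\times n$ array, bijective onto $\{1,\dots,mn\}$, with every column sum equal to $c=\tfrac12 m(mn+1)$ (making $B$ monochromatic) and the $m$ row sums taking exactly two values. Equal column sums are obtained by distributing the complementary pairs $\{k,\,mn+1-k\}$, $m/2$ per column; splitting the two cells of each pair between the top $m/2$ and the bottom $m/2$ rows can be organized so the row sums collapse onto the two consecutive integers nearest their half-integer average $\tfrac12 n(mn+1)$, and these are $\ne c$ because $m\ne n$. (ii) If $m=n\ge2$, the row and column averages both equal $s=\tfrac12 n(n^2+1)$, so instead I would build an $n\times n$ bijective array with all row sums equal to $s$ and column sums taking exactly two values, one below and one above $s$ --- for $K_{3,3}$, rows $\{1,6,8\},\{3,5,7\},\{9,2,4\}$ give column groups $\{1,3,9\},\{2,5,6\},\{4,7,8\}$ of sums $13,13,19$ --- the base case $C_4=K_{2,2}$ and the first few values being checked by hand. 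This last step --- exhibiting genuine bijections onto $\{1,\dots,mn\}$ that pin one family of line-sums to a single value while confining the transverse family to a disjoint two-element set --- is the heart of the theorem and where essentially all the work lies; the lower bounds above are soft counting arguments by comparison.
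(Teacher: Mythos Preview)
Your lower-bound arguments (the star case, the $\chi\le\chi_{la}$ bound, and the parity/integrality obstruction to two colors) are correct and standard, and your use of Harmuth's magic rectangles for the second branch matches what the paper (and the literature) does. Note, however, that this theorem is \emph{quoted} from \cite{lau} and \cite{premalatha}; the present paper does not give a self-contained proof. What the paper does contribute, at the start of Section~2, is a short argument for the different-parity subcase of ``otherwise'': it invokes the \emph{nearly magic rectangle} theorems of Chai--Singh--Stufken (Theorems~\ref{near0}--\ref{near2}), which guarantee, for $m$ even and $n$ odd, an $m\times n$ array on $\{1,\dots,mn\}$ with constant column sum $\tfrac{m(mn+1)}{2}$ and row sums equal to $\tfrac{n(mn+1)\pm1}{2}$, each occurring $m/2$ times. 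That is exactly the object you describe in step~(i), so your plan and the paper's agree there; the difference is that you outline an ad~hoc complementary-pairs construction, whereas the paper simply cites the NMR existence theorems, whose proofs are themselves nontrivial (they proceed by cases $m\equiv0,2\pmod4$ and by explicit block constructions).

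The genuine gap in your proposal is that you have not actually carried out either ``otherwise'' construction. In~(i) the sentence ``can be organized so the row sums collapse onto the two consecutive integers nearest their half-integer average'' is precisely the content of the NMR theorems and is not obvious; the naive complementary-pair layout gives constant column sums but not near-constant row sums, and getting both simultaneously is the whole point of \cite{FSC1}. In~(ii), the equal-parts case $m=n\ge2$, you give one example for $n=3$ and defer the rest to ``checked by hand'', which is not a proof; you also need to verify that the two column-sum values avoid the common row sum $s$, which fails if you are not careful (for $n$ even, $s$ is attainable as a column sum). The paper itself does not treat this subcase --- it is handled in the cited references --- so if you want a complete argument you must either supply explicit families of such arrays for all $n\ge2$ or cite the original proofs in \cite{premalatha,lau}.
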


\par Use of combinatorial objects like magic squares, magic rectangles, etc. helps to define labelings much more efficiently and simply.

A \emph{magic square} is an $n\times n$ array whose entries are the consecutive numbers $1,2,\dots,n^2$, each appearing exactly once, such that the sum of each row, column, and the main and main backward diagonal is equal to $\frac{n(n^2 + 1)}{2}.$ For more details, see Kudrle et al. \cite{combdes}.
The following theorem gives the existence of odd by odd magic squares.
\begin{theorem} \cite{FSC}\label{odd}
For $n$ odd, consider the $n\times n$  matrices\\

$N_1=\left[\begin{array}{c c c c c}
1&2&...&n-1&n \\  
2& 3 &...&n&1 \\
.&.&...&.&.\\
.&.&...&.&.\\
.&.&...&.&.\\
n-1&n&...&n-3&n-2\\
n&1&...&n-2&n-1\\
\end{array}\right]$ and \ $N_2 = \left[\begin{array}{c c c c c}
n&n-1&...&2&1 \\  
1& n &...&3&2 \\
.&.&...&.&.\\
.&.&...&.&.\\
.&.&...&.&.\\
n-2&n-3&...&n&n-1\\
n-1&n-2&...&1&n\\
\end{array}\right]$\\

Then an $n\times n$ magic rectangle $M$ is given by $M=N_1+n\left(N_2-J_{nn}\right)$, where $J_{nn}$ is an $n \times n$ matrix with all entries as 1.
\end{theorem}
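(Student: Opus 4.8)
The plan is to recognize $M=N_1+n\bigl(N_2-J_{nn}\bigr)$ as the superposition of two orthogonal Latin squares of order $n$, and to read off both the ``each value exactly once'' property and the line sums from that structure. First I would record closed forms for the entries by inspecting the cyclic patterns:
\[
(N_1)_{ij}=\bigl((i+j-2)\bmod n\bigr)+1,\qquad (N_2)_{ij}=n-\bigl((j-i)\bmod n\bigr),
\]
so in particular $(N_1)_{ij}-1\equiv i+j-2$ and $(N_2)_{ij}-1\equiv i-j-1\pmod n$, with both ``digits'' lying in $\{0,1,\dots,n-1\}$. For fixed $i$, as $j$ runs over $\{1,\dots,n\}$ the integer $i+j-2$ runs through a complete residue system modulo $n$, so each row of $N_1$ is a permutation of $\{1,\dots,n\}$; the same holds for its columns and, by an identical argument, for $N_2$. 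Hence $N_1$ and $N_2$ are Latin squares on $\{1,\dots,n\}$, and consequently every row and every column of $N_1$ sums to $\tfrac{n(n+1)}2$ while every row and every column of $N_2-J_{nn}$ sums to $\tfrac{n(n-1)}2$.

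The crucial step, and the one where the oddness of $n$ is used, is that $N_1$ and $N_2$ are \emph{orthogonal}: the ordered pair $\bigl((N_1)_{ij},(N_2)_{ij}\bigr)$ takes each value of $\{1,\dots,n\}^2$ exactly once. Equivalently, the map $(i,j)\mapsto\bigl((i+j-2)\bmod n,\ (i-j-1)\bmod n\bigr)$ on $(\mathbb{Z}/n\mathbb{Z})^2$ is a bijection. Stripping off the fixed translation, this is the $\mathbb{Z}/n\mathbb{Z}$-linear map $(i,j)\mapsto(i+j,\ i-j)$, whose matrix $\left(\begin{smallmatrix}1&1\\1&-1\end{smallmatrix}\right)$ has determinant $-2$. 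Since $n$ is odd, $2$ is a unit in $\mathbb{Z}/n\mathbb{Z}$, the matrix is invertible, and the map is a bijection. (For even $n$ the determinant is a zero divisor and orthogonality fails, which is exactly why the construction is restricted to odd $n$.) I expect this to be the only genuine obstacle; the remaining steps are bookkeeping.

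Finally I would assemble everything. Since $M_{ij}-1=\bigl((N_1)_{ij}-1\bigr)+n\bigl((N_2)_{ij}-1\bigr)$ is the base-$n$ expansion with digits $(N_1)_{ij}-1,(N_2)_{ij}-1\in\{0,\dots,n-1\}$, the orthogonality just proved says these digit pairs exhaust $\{0,\dots,n-1\}^2$, so $M$ contains each of $1,2,\dots,n^2$ exactly once. For the line sums, for every row $i$,
\[
\sum_{j=1}^{n}M_{ij}=\sum_{j=1}^{n}(N_1)_{ij}+n\sum_{j=1}^{n}\bigl((N_2)_{ij}-1\bigr)=\tfrac{n(n+1)}2+n\cdot\tfrac{n(n-1)}2=\tfrac{n(n^2+1)}2,
\]
and the identical computation with the roles of $i$ and $j$ interchanged gives the same value for each column sum. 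Thus $M$ is an $n\times n$ magic rectangle with common line sum $\tfrac{n(n^2+1)}2$. One caveat I would flag: the two diagonals of $M$ need not attain this value — on the main diagonal $(N_2)_{ii}-1\equiv-1$ is constant, so e.g. for $n=3$ the main diagonal of $M$ sums to $24\ne 15$ — which is why the array is a magic \emph{rectangle} rather than a magic square in the stricter sense defined above.
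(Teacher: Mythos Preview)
Your proof is correct and follows exactly the approach indicated by the cited source: the paper does not reproduce a proof of this theorem, but Remark~\ref{oddrk} makes explicit that the argument in \cite{FSC} rests on $N_1$ and $N_2$ being orthogonal Latin squares, which is precisely what you verify via the invertibility of $\left(\begin{smallmatrix}1&1\\1&-1\end{smallmatrix}\right)$ over $\mathbb{Z}/n\mathbb{Z}$ for odd $n$. Your closing caveat about the diagonals is also apt and explains why the statement promises only a magic rectangle rather than a magic square.
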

\begin{remark}\label{oddrk}
 From the proof of Theorem \ref{odd} \cite{FSC}, it can be noted that the result is valid even if we replace the matrix $N_2$ with \\ $N_3 = \left[\begin{array}{c c c c c}
1&n&...&3&2 \\  
2& 1 &...&4&3 \\
.&.&...&.&.\\
.&.&...&.&.\\
.&.&...&.&.\\
n-1&n-2&...&1&n\\
n&n-1&...&2&1\\
\end{array}\right]$,\\ as the matrices $N_1$ and $N_3$ are also orthogonal latin squares.
\end{remark}
\medskip
\par A generalization of a magic square is a \emph{magic rectangle} $MR(a, b)=(m_{ij})$, which is an $a \times b$ array with the entries $1,2,\dots, ab,$ each appearing once, with all its row sums equal to a constant $\rho$ and all its column sums equal to a constant $\sigma$.
\par The sum of the entries in $MR(a,b)$ is $\frac{ab(ab+1)}{2}$ and the magic constants are
\begin{eqnarray*}
	\sigma &=& \sum_{i=1}^a m_{ij}=\frac{a(ab+1)}{2},~~\text{for any}~~j\in \{1,2,\dots,b\}, \  \text{and}\\
	\rho &=& \sum_{j=1}^b m_{ij}=\frac{b(ab+1)}{2},~~\text{for any}~~i\in \{1,2,\dots,a\}.
\end{eqnarray*}

\par  Harmuth \cite{Harmuth1,Harmuth2} proved that such arrays exist whenever $a$ and $b$ are of the same parity, except when, exactly one of $a$ and $b$ is $1$, or $a=b=2$. 
\begin{theorem}\label{MR} \cite{Harmuth1,Harmuth2}
A magic rectangle $MR(a,b)$ exists if and only if $a,b>1$,  $ab>4$ and $a\equiv b\left(\bmod 2\right)$.
\end{theorem}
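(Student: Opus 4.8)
The plan is to separate the (routine) necessity of the three conditions from the (substantial) sufficiency, organising the latter around a product construction fed by a family of base rectangles.

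\emph{Necessity.} The common column sum is $\sigma=\frac{a(ab+1)}{2}$ and the common row sum is $\rho=\frac{b(ab+1)}{2}$, and each must be an integer. If exactly one of $a,b$ is even, say $a$ even and $b$ odd, then $ab+1$ is odd, so $\rho$ is an odd integer divided by $2$, a contradiction; hence $a\equiv b\pmod 2$. If $a=1$, each column is a single cell, so constancy of the column sums forces all entries equal, impossible once $b>1$; hence $a,b>1$. If $a=b=2$, the cells carry $1,2,3,4$ with $m_{11}+m_{12}=m_{21}+m_{22}$ and $m_{11}+m_{21}=m_{12}+m_{22}$, forcing $m_{12}=m_{21}$, again impossible; together with $a,b>1$ this is precisely the requirement $ab>4$.

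\emph{Sufficiency.} The first ingredient is a product lemma: if $MR(a,b)$ and $MR(c,d)$ are realised by arrays $A=(A_{ij})$ and $B=(B_{i'j'})$, then the $ac\times bd$ array with rows indexed by pairs $(i,i')$, columns by pairs $(j,j')$, and entries
\[
M_{(i,i'),(j,j')}=cd\,(A_{ij}-1)+B_{i'j'}
\]
realises $MR(ac,bd)$: dividing $v-1$ by $cd$ shows that as the index pairs vary the entries run bijectively over $\{1,\dots,abcd\}$, while summing along a row gives $cd^{2}(\rho_A-b)+b\rho_B$, with $\rho_A,\rho_B$ the row sums of $A,B$ --- a value independent of $(i,i')$; the column case is symmetric. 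Granting this, it remains to construct base rectangles: first, $MR(a,b)$ for all odd $a,b\ge3$, which I would build by a ``broken-diagonal'' superposition in the spirit of the matrices $N_1,N_2,N_3$ from Theorem~\ref{odd} and Remark~\ref{oddrk}, a modular construction that succeeds precisely because $a$ and $b$ are odd; and second, enough even-by-even rectangles --- certainly all $MR(2,2k)$ with $k\ge2$ via an explicit pairing of $\{1,\dots,4k\}$, together with a few small seeds such as $MR(4,4)$ --- from which the product lemma, supplemented where necessary by a concatenation-and-rebalancing step (needed whenever a target $MR(2s,2t)$ is not literally a Kronecker product of admissible smaller rectangles), yields every admissible even-by-even case.

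I expect the even-by-even case to carry the real difficulty. The parity-matched product lemma alone does not reach all $MR(2s,2t)$ --- for instance $MR(6,10)$ is not $MR(a,b)\times MR(c,d)$ for any admissible factors --- so one must either write down a doubly-periodic filling of a general $2s\times 2t$ grid directly, or run a recursion on the $2$-adic valuation of the sides while tracking carefully how the two magic constants transform when the bands of a coarser magic rectangle are rescaled and re-ordered. That bookkeeping, rather than any single clever identity, is the crux; the odd-by-odd base case, by contrast, should fall out cleanly from the Siamese-type construction already implicit in Theorem~\ref{odd}.
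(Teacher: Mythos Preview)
The paper does not actually prove this theorem: it is quoted from Harmuth \cite{Harmuth1,Harmuth2} as a classical result and used as a black box throughout, so there is no in-paper argument to compare your proposal against.

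On the merits of your proposal itself: the necessity direction is correct and cleanly argued. The sufficiency direction, however, is a plan rather than a proof, and you yourself identify the gap. Your product lemma is sound, but --- as you note with the $MR(6,10)$ example --- it does not generate all admissible even-by-even rectangles from any finite seed set, and the ``concatenation-and-rebalancing step'' you invoke to cover the residual cases is never specified. That step is not a routine bookkeeping exercise: it is essentially the whole content of the even case in Harmuth's original work and in the later treatments of Hagedorn \cite{Hega} and De~Los~Reyes--Das--Midha \cite{Das,Das2}. A second, smaller gap is that Theorem~\ref{odd} and Remark~\ref{oddrk} as stated in the paper produce only odd-order magic \emph{squares}; the rectangular odd-by-odd case with $a\neq b$ needs a genuinely two-parameter modular construction (as in \cite{FSC,magic}), which you gesture at but do not supply. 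So while the architecture you propose is the right one and matches the literature, the proposal as written does not close either the odd-rectangular or the even-by-even case.
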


\par The existence of even by even and odd by odd magic rectangles are also settled in \cite{FSC,Das,Das2,Hega,magic}.

\par A magic rectangle set, $M R S(a, b ; c)$  \cite{Fron2}, is a collection of $c$ arrays $a \times b$ whose entries are elements of $\{1,2, \ldots, a b c\}$, each appearing once, with all row sums in every rectangle equals to a constant $\rho=\frac{b(a b c+1)}{2}$ and all column sums in every rectangle equal to a constant $\sigma=\frac{a(a b c+1)}{2}$.

 In \cite{Fron1}, Froncek proved the existence of ${MRS}(a, b ; c)$.
\begin{theorem}\label{froncek} \cite{Fron1}
    Let $a, b$ and $c$ be positive integers such that $1<a \leq b$. Then a magic rectangle set $M R S(a, b ; c)$ exists if and only if either $a, b, c$ are all odd, or $a$ and $b$ are both even, $c$ is arbitrary, and $(a, b) \neq(2,2)$.
\end{theorem}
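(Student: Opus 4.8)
The plan is to prove the two implications separately, the ``if'' direction carrying essentially all the work. \emph{Necessity.} The two magic constants must be integers, $\rho=\frac{b(abc+1)}{2}$ and $\sigma=\frac{a(abc+1)}{2}$. If $abc$ is even then $abc+1$ is odd, so integrality of $\rho$ and of $\sigma$ forces $b$ and $a$, respectively, to be even; if $abc$ is odd then $a,b,c$ are all odd. These are exactly the two parity regimes in the statement. For the excluded case $(a,b)=(2,2)$: in a $2\times 2$ block with row sums $p+q=r+s$ and column sums $p+r=q+s$ one subtracts to get $q=r$, contradicting distinctness of the entries; and the hypothesis $a>1$ is forced, since a one-row rectangle would require its single-entry columns all to equal $\sigma$.

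\emph{Sufficiency by blending.} Fix a magic rectangle $MR(a,b)=(p_{ij})$ with entries $1,\dots,ab$; it exists by Theorem~\ref{MR}, because in every remaining case $a\equiv b\pmod 2$, $a,b>1$, and $ab>4$ (the only small even exception $(2,2)$ being excluded). Suppose in addition we can produce $c$ arrays $T^{(1)},\dots,T^{(c)}$, each $a\times b$ with entries in $\{0,1,\dots,c-1\}$, such that: (i) for every cell $(i,j)$ the list $T^{(1)}_{ij},\dots,T^{(c)}_{ij}$ is a permutation of $\{0,\dots,c-1\}$; (ii) every row sum of every $T^{(k)}$ equals $\frac{b(c-1)}{2}$; (iii) every column sum of every $T^{(k)}$ equals $\frac{a(c-1)}{2}$. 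Then $m^{(k)}_{ij}:=p_{ij}+ab\,T^{(k)}_{ij}$ is an $MRS(a,b;c)$: by (i) the entries of the $c$ rectangles together exhaust $\{1,\dots,abc\}$, and since $(p_{ij})$ has constant row and column sums, (ii) and (iii) make each row sum equal $\rho$ and each column sum equal $\sigma$ in every rectangle (a one-line check). Thus everything reduces to constructing the balancing family $\{T^{(k)}\}$, and note that $c=1$ is the trivial case $MRS(a,b;1)=MR(a,b)$.

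\emph{The even case.} When $a$ and $b$ are even the family is easy. Choose a $0$--$1$ matrix $\varepsilon=(\varepsilon_{ij})$ of size $a\times b$ with exactly $b/2$ ones in each row and $a/2$ ones in each column (for instance $\varepsilon_{ij}=1$ precisely when $i\le a/2$ and $j\le b/2$, or $i>a/2$ and $j>b/2$), and set $T^{(k)}_{ij}=k-1$ where $\varepsilon_{ij}=1$ and $T^{(k)}_{ij}=c-k$ where $\varepsilon_{ij}=0$. Property (i) is immediate, and each row (resp.\ column) of $T^{(k)}$ contributes $k-1$ from $b/2$ (resp.\ $a/2$) cells and $c-k$ from the rest, for a total of $\frac{b}{2}(c-1)$ (resp.\ $\frac{a}{2}(c-1)$), independent of $k$ and of the row (column). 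This settles $a,b$ even with $c$ arbitrary.

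\emph{The odd case, and the main obstacle.} When $a,b,c$ are all odd, $b/2\notin\mathbb{Z}$ and the sign trick collapses. Here I would read a choice of $\{T^{(k)}\}$ as an assignment, to each cell $(i,j)$ of the $a\times b$ grid, of a permutation $\pi_{ij}$ of $\{0,\dots,c-1\}$ via $T^{(k)}_{ij}=\pi_{ij}(k)$; conditions (ii) and (iii) then say precisely that for each fixed row $i$ the permutations $\pi_{i1},\dots,\pi_{ib}$ form the rows of a $b\times c$ Kotzig array (an array whose rows are permutations of $\{0,\dots,c-1\}$ with all column sums $\frac{b(c-1)}{2}$), and that for each fixed column $j$ the permutations $\pi_{1j},\dots,\pi_{aj}$ form an $a\times c$ Kotzig array --- and moreover these two families must be the horizontal and vertical slices of one and the same $a\times b\times c$ array. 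I would assemble such an array from cyclic and Latin-square gadgets of odd order in the spirit of Theorem~\ref{odd} and Remark~\ref{oddrk}: $2\times c$ and $3\times c$ Kotzig arrays exist for odd $c$ (for the first, stack the row $(0,1,\dots,c-1)$ on its reverse), these tile into $p\times c$ Kotzig arrays for any $p$, and one must then interlock the row-direction and column-direction constructions so that every slice is simultaneously valid. Verifying that a \emph{single} three-dimensional array meets conditions (i)--(iii) at once --- rather than each face in isolation --- is the crux of the argument and the place where a genuinely clever choice (along the lines of Froncek's construction) is needed. Granting it, the blending formula finishes this case, and together with the even case and the necessity discussion it gives the claimed ``if and only if''.
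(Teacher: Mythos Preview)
The paper does not prove this theorem; it is quoted as a known result from Froncek \cite{Fron1}, so there is no in-paper argument to compare against.

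On its own merits, your necessity argument and your even-case blending construction are correct and clean. The genuine gap is in the odd case. You correctly reduce the problem to building an $a\times b\times c$ array of values in $\{0,\dots,c-1\}$ whose cell-fibres are permutations and whose row-slices and column-slices are simultaneously Kotzig arrays, but then you write ``Granting it, the blending formula finishes this case.'' That sentence is precisely where the theorem lives: the odd case is the entire content of Froncek's paper \cite{Fron1}. Knowing that $p\times c$ Kotzig arrays exist for each odd $p$ separately does not give you a single three-dimensional array whose horizontal and vertical slices are \emph{simultaneously} Kotzig; the interlocking requires an explicit compatible construction, and you have not supplied one. As written, the sufficiency direction for $a,b,c$ all odd is a plan, not a proof.
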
 

When $a$ and $b$ are of different parity, the magic rectangle $MR(a,b)$ does not exist. Recently, Chai et al. \cite{FSC1} introduced the concept of {nearly magic rectangles}. A \emph{nearly magic rectangle} $NMR(a,b)$ is defined as an $a\times b$ array with $a$ even and $b$ odd that contains each of the integers from the set $\{1,2,\dots,ab\}$ exactly once and the column sums are constant, while the row sums differ by at most one. The following results are from \cite{FSC1}.
\begin{theorem}\cite{FSC1}\label{near0}
Let $b\geq 3$ be odd. Then there exists a $2\times b$ nearly magic rectangle.
\end{theorem}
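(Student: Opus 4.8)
The plan is to reduce the construction of an $NMR(2,b)$ to an elementary subset-sum statement. First I would pin down the forced parameters: the entries $1,2,\dots,2b$ sum to $b(2b+1)$, and since this total is distributed over $b$ columns of equal sum, every column sum equals $2b+1$; consequently the two entries in each column form a complementary pair $\{i,\,2b+1-i\}$, and because the columns are disjoint and exhaust $\{1,\dots,2b\}$, the $b$ columns realize exactly the $b$ pairs $\{i,\,2b+1-i\}$, $i=1,\dots,b$. As $b$ is odd, $b(2b+1)$ is odd, so the two row sums — which may differ by at most one — are forced to be $\tfrac{b(2b+1)-1}{2}$ and $\tfrac{b(2b+1)+1}{2}$. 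Hence it is enough to decide, for each $i$, which member of $\{i,\,2b+1-i\}$ is placed in the top row so that the top row sums to $\tfrac{b(2b+1)-1}{2}$; the bottom row is then determined and automatically attains the complementary sum.

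Next I would recast this as a subset-sum problem. Begin with the top row $1,2,\dots,b$, of sum $\tfrac{b(b+1)}{2}$; replacing the top entry $i$ by $2b+1-i$ raises the top-row sum by the odd number $2b+1-2i$, and as $i$ ranges over $\{1,\dots,b\}$ these increments range bijectively over $\{1,3,5,\dots,2b-1\}$. Since
\[
 \frac{b(2b+1)-1}{2}-\frac{b(b+1)}{2}=\frac{b^{2}-1}{2},
\]
the problem reduces to finding a set $S\subseteq\{1,3,5,\dots,2b-1\}$ with $\sum_{s\in S}s=\tfrac{b^{2}-1}{2}$ and then swapping exactly those columns whose increments lie in $S$.

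Finally I would exhibit such an $S$. The cleanest route is the elementary fact that the subset sums of $\{1,3,5,\dots,2b-1\}$ are precisely the integers of $[0,b^{2}]$ other than $2$ and $b^{2}-2$; this follows by induction on $b$, since going from $\{1,3,\dots,2b-3\}$ to $\{1,3,\dots,2b-1\}$ adjoins the shifted copy covering $[2b-1,\,b^{2}]$, which fills in all remaining values. Because $\tfrac{b^{2}-1}{2}$ is neither $2$ nor $b^{2}-2$ once $b\ge 3$, the desired $S$ exists, and reading off the two rows produces the required $NMR(2,b)$; for $b=3$ one may simply take the array with top row $1,5,4$ and bottom row $6,2,3$. (Alternatively, an explicit $S$ can be written down after observing that the increments split into pairs $\{2j-1,\,2b-(2j-1)\}$ of common sum $2b$ together with the lone increment $b$, with the precise choice depending on $b\bmod 4$.) I do not expect any genuine obstacle here: all the content lies in the two reductions, which are routine bookkeeping, and the only place calling for a moment's care — that $\tfrac{b^{2}-1}{2}$ is attainable as a subset sum of the odd numbers up to $2b-1$ — is settled by the short induction above.
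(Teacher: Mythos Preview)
The paper does not supply its own proof of this statement: Theorem~\ref{near0} is quoted from \cite{FSC1} without argument, so there is nothing in the present paper to compare your proposal against.

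Your argument is correct and self-contained. The reduction --- first to the observation that the $b$ columns must be exactly the complementary pairs $\{i,2b+1-i\}$, and then to a subset-sum problem for $\{1,3,\dots,2b-1\}$ with target $\tfrac{b^{2}-1}{2}$ --- is clean and valid. One small imprecision worth tightening: in the inductive step you write that adjoining $2b-1$ produces a shifted copy ``covering $[2b-1,b^{2}]$'', but by the induction hypothesis that shifted copy in fact omits $2b+1$ and $b^{2}-2$. The point you need is that $2b+1$ is already a subset sum of $\{1,3,\dots,2b-3\}$ once $b\ge 4$ (since $2b+1\le (b-1)^{2}$ and $2b+1\notin\{2,(b-1)^{2}-2\}$), while for $b=3$ one has $2b+1=7=b^{2}-2$; hence only $2$ and $b^{2}-2$ remain unattainable, as claimed. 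With that detail supplied the induction goes through, and since $\tfrac{b^{2}-1}{2}\notin\{2,b^{2}-2\}$ for all odd $b\ge 3$, the required $S$ exists. Your parenthetical remark about pairing increments of sum $2b$ would give an explicit $S$ and avoid the induction altogether, which is closer in spirit to the direct construction in \cite{FSC1}.
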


\begin{theorem}\cite{FSC1}\label{near1}
Let $a\equiv 2 \bmod 4, a\geq 6$, and $b \geq 3$ be odd. Then there exists a nearly magic rectangle of order $a \times b$.
\end{theorem}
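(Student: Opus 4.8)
The plan is to assemble the required $a\times b$ array out of ``complementary pairs'' $\{k,\,ab+1-k\}$, $k=1,\dots,ab/2$, rather than out of consecutive blocks of integers. Put $m=a/2$; since $a\equiv 2\pmod 4$ and $a\ge 6$ we have $m$ odd and $m\ge 3$ (incidentally, $m$ odd is exactly the condition under which a magic rectangle $MR(m,b)$ exists by Theorem~\ref{MR}). The target parameters are forced: the common column sum must be $\sigma=\tfrac{a(ab+1)}{2}=m(ab+1)$, an integer because $a$ is even, whereas $b(ab+1)$ is odd, so the $a$ row sums must be $m$ copies of $\tfrac{b(ab+1)-1}{2}$ and $m$ copies of $\tfrac{b(ab+1)+1}{2}$ — consecutive integers, which is all ``nearly magic'' asks for.

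First I would pin down the coarse structure, which makes the column condition automatic. Given \emph{any} partition $\{1,\dots,ab/2\}=I_1\sqcup\cdots\sqcup I_m$ into $m$ blocks of size $b$, build an $a\times b$ array $N$ as $m$ stacked $2\times b$ sub-blocks $B_1,\dots,B_m$: the columns of $B_i$ are the pairs $\{k,\,ab+1-k\}$ with $k\in I_i$, and in each such column one member of the pair goes to the top row of $B_i$ and the other to the bottom. Then every column of $N$ picks up exactly one complementary pair from each block, so every column sum of $N$ equals $m(ab+1)=\sigma$, no matter how the blocks are chosen and no matter how the top/bottom choices are made inside them. The whole problem is thereby reduced to controlling the two row sums of each $B_i$.

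Next I would analyse one block. If inside $B_i$ we send $k$ to the top row for $k\in S_i\subseteq I_i$ and $ab+1-k$ to the top row for $k\in I_i\setminus S_i$, then, expanding the sums, the top and bottom row sums of $B_i$ differ by $2\bigl(2\sum_{k\in S_i}k-\Sigma_i\bigr)+(b-2|S_i|)(ab+1)$, where $\Sigma_i=\sum_{k\in I_i}k$; these two sums add to the odd number $b(ab+1)$, so they differ by an odd amount, and taking $|S_i|=(b-1)/2$ they differ by exactly $\pm 1$ precisely when $\sum_{k\in S_i}k$ equals $t_i:=\tfrac{\Sigma_i-ab/2}{2}$ or $\tfrac{\Sigma_i-ab/2-1}{2}$ — exactly one of these two consecutive numbers being an integer. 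So it suffices to find a partition $\{1,\dots,ab/2\}=I_1\sqcup\cdots\sqcup I_m$ into $b$-subsets in which each $I_i$ contains a $\tfrac{b-1}{2}$-element subset $S_i$ of sum $t_i$; feeding such $S_i$ back into the block recipe then yields a nearly magic rectangle, which would complete the proof.

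The hard part is this ``balanced partition'' lemma, and I expect it to consume the bulk of the argument. One cannot simply take the $I_i$ to be runs of consecutive integers or residue classes: a short calculation shows that then $\Sigma_i$, hence $t_i$, is forced to be far too small (even negative) for the ``low'' blocks, so the required $S_i$ cannot exist; the blocks must instead be ``spread'', each carrying both small and large elements so that $\Sigma_i$ stays moderate while still varying from block to block (for $a=6,b=3$, for instance, the blocks $\{1,2,9\},\{4,5,8\},\{3,6,7\}$ of $\{1,\dots,9\}$ work, with $S_i=\{1\},\{4\},\{3\}$ and row sums $28,29,29,28,28,29$). I would therefore construct the partition explicitly — most likely arranging each $I_i$ as a short arithmetic-progression-type ``spine'' together with a run of consecutive integers used as a tunable buffer, since inside a run of consecutive integers the attainable sums of $\tfrac{b-1}{2}$-subsets fill an entire interval, so one only has to verify $t_i$ lands in that interval — and I would dispose of the smallest cases ($b=3$, or small $m$) by hand, with the general argument splitting into a couple of subcases according to $b\bmod 4$ to track which of the two candidates for $t_i$ is the integer. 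Everything outside this lemma — constancy of the column sums, and consecutive rows differing by at most one — falls out of the pair-and-block scaffolding for free.
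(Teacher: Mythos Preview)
The paper does not prove this theorem at all: Theorem~\ref{near1} is quoted from Chai--Singh--Stufken~\cite{FSC1} as a background result, with no argument given. So there is no ``paper's own proof'' to compare your proposal against; the present paper simply invokes the existence of nearly magic rectangles as a black box when labeling $K_{m,n}$ and $K_{1,m,n}$.

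That said, a brief assessment of your plan on its own merits. The scaffolding is sound: stacking $m=a/2$ blocks of complementary pairs $\{k,ab+1-k\}$ does make every column sum equal to $m(ab+1)$ automatically, and your computation of the top--bottom discrepancy within a block is correct, so the problem really does reduce to the ``balanced partition'' statement you isolate. The genuine gap is exactly where you flag it: you have not produced the partition $I_1\sqcup\cdots\sqcup I_m$ of $\{1,\dots,ab/2\}$ into $b$-sets each admitting a $\tfrac{b-1}{2}$-subset of the prescribed sum $t_i$, only argued heuristically that a ``spine plus consecutive buffer'' design should work. This is the entire content of the theorem, and the sketch (``most likely\dots'', ``a couple of subcases according to $b\bmod 4$'') is not yet a proof --- in particular you have not verified that your target $t_i$ actually falls in the interval of attainable $\tfrac{b-1}{2}$-subset sums for each block, nor handled the parity bookkeeping that decides which of the two candidate values of $t_i$ is the integer one. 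For what it is worth, the construction in~\cite{FSC1} proceeds differently, building $NMR(a,b)$ inductively from $NMR(a,b-2)$ together with an $a\times 2$ piece rather than via a one-shot partition lemma; your complementary-pair framework is a legitimate alternative route, but the combinatorial core still needs to be written out.
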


\begin{theorem}\cite{FSC1}\label{near2}
Let $a\equiv 0 \bmod 4$ and $b \geq 3$ be odd. Then there exists a nearly magic rectangle of order $a \times b$.
\end{theorem}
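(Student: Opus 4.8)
The plan is to deduce the statement from the existence of an ordinary magic rectangle of twice the width and half the height, combined with an induction through the smaller nearly-magic cases. Write $a=4t$ and set $\rho^\ast:=\frac{b(ab+1)-1}{2}$; since $ab$ is even, the two nearest integers to $\frac{b(ab+1)}{2}$ are $\rho^\ast$ and $\rho^\ast+1$, and a short count shows any $NMR(a,b)$ must have exactly $a/2=2t$ rows of sum $\rho^\ast+1$.

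First I would split the rows of a magic rectangle. As $2t$ and $2b$ are even with $(2t,2b)\ne(2,2)$, Theorem \ref{MR} gives $M=MR(2t,2b)$ on $\{1,\dots,4tb\}$, with every row summing to $b(4tb+1)=2\rho^\ast+1$ and every column to $t(4tb+1)$. Cut each row $i$ of $M$ into its first $b$ and its last $b$ entries, and stack the $4t$ resulting length-$b$ blocks into an $a\times b$ array $A$. Column $j$ of $A$ is then the union of columns $j$ and $b+j$ of $M$, so it sums to $2t(4tb+1)=\frac{a(ab+1)}{2}$: the column constant is correct automatically. The two row sums produced by a row of $M$ add up to $2\rho^\ast+1$, so $A$ is a nearly magic rectangle precisely when $M$ is \emph{half-balanced}, i.e. the first and second halves of each of its rows have sums differing by exactly $1$.

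Next I would force a half-balanced $M$ by complementary column pairing: demand $M_{i,j}+M_{i,b+j}=4tb+1$ for all $i$ and $1\le j\le b$. Then $M$ is automatically a magic rectangle once its first $b$ columns each sum to $t(4tb+1)$, and being half-balanced is equivalent to the $2t\times b$ array $N:=(M_{i,j})_{j\le b}$ having all row sums in $\{\rho^\ast,\rho^\ast+1\}$. By induction on $a$, take $B=NMR(2t,b)$ on $\{1,\dots,2tb\}$ — this exists by Theorem \ref{near0} if $t=1$, by Theorem \ref{near1} if $2t\equiv 2\pmod 4$ and $2t\ge 6$, and by the inductive hypothesis if $2t\equiv 0\pmod 4$ — and obtain $N$ by complementing the entries in a cell set $S$ (replacing $B_{ij}$ by $4tb+1-B_{ij}$ there). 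Since $B$ is a bijection onto $\{1,\dots,2tb\}$, the array $N$ is a system of distinct representatives of the pairs $\{m,4tb+1-m\}$, so $M$ is well defined, and with $c_{ij}:=4tb+1-2B_{ij}$ one checks that the columns of $N$ all sum to $t(4tb+1)$ exactly when $\sum_{(i,j)\in S}c_{ij}=2t^2b$ in every column, while the row sums lie in $\{\rho^\ast,\rho^\ast+1\}$ as soon as $\sum_{(i,j)\in S}c_{ij}=tb^2$ in every row.

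The main obstacle is this last balanced-split step. The array $C=(c_{ij})$ is made of the $2tb$ smallest positive odd numbers, has column sums $4t^2b$ and row sums $2tb^2\pm1$, and $S$ must carry exactly half of each column's weight and half (rounded) of each row's weight; one is free to permute the rows and columns of $B$ first. I would try to pick $B$ whose columns are complement-symmetric in $\{1,\dots,2tb\}$ (each column a union of pairs summing to $2tb+1$), so that each column of $C$ breaks into $t$ pairs $\{c,4tb-c\}$ and choosing one member of each pair halves that column; the rows, having odd length, cannot be handled the same way and need a local correction, and I expect the write-up to split into cases on the parity of $t$ and on $b\bmod 4$, with a couple of column pairs allowed to sum to $4tb$ and $4tb+2$ instead of $4tb+1$ when a parity discrepancy must be absorbed. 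The base case $a=4$ is easiest done directly: $M=\bigl(\begin{smallmatrix}1&11&7&3&9&8\\12&2&6&10&4&5\end{smallmatrix}\bigr)=MR(2,6)$ is half-balanced and yields the $4\times 3$ nearly magic rectangle with rows $(1,11,7),(3,9,8),(12,2,6),(10,4,5)$.
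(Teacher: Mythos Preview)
The paper does not contain a proof of this statement: Theorem~\ref{near2} is quoted from \cite{FSC1} and is used in the paper only as a black-box existence result (in the discussion preceding Example~2.1 and in Theorem~\ref{partite2}). There is therefore no ``paper's own proof'' to compare your attempt against.

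As for the proposal itself, the overall strategy---build $MR(2t,2b)$ with complementary column pairs $M_{i,j}+M_{i,b+j}=4tb+1$, then split each row---is sound, and your verification that the column constant comes out right is correct, as is the explicit $4\times 3$ base case. The genuine gap is exactly where you flag it: the ``balanced-split'' step that produces the first $b$ columns $N$ from an $NMR(2t,b)$ by complementing on a set $S$. You need $S$ to carry weight $2t^{2}b$ in every column and $tb^{2}$ (up to $\pm1$) in every row of the odd-entry array $C$, and you have not shown such an $S$ exists. The parity bookkeeping alone is delicate (each $c_{ij}$ is odd, so the number of cells of $S$ in a given row or column is forced mod~$2$, and these constraints must be simultaneously compatible with the target sums), and your sketch of using complement-symmetric columns of $B$ does not address how to hit the row targets with odd row length. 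Without this step the argument is a plan, not a proof; the original paper \cite{FSC1} handles the construction by a different, more explicit, case analysis rather than the inductive complementation you propose.
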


\par The concept of Kotzig array was introduced in \cite{kotzig1971magic}. A Kotzig array $KA(a,b)$ is an $a\times b$ array with $a\geq 2$, in which every row contains each of the integers $1,2,...,b$ exactly once and the sum of the entries  in each column is the same constant $c=\frac{a(b+1)}{2}$. The following theorem can be obtained from \cite{MR3013201}.

\begin{theorem}\label{kotzig1} \cite{MR3013201}
Let $h\geq1$. A Kotzig array $KA(2h,b)$ exists for any $b;KA(2h+1,b)$ exists if and only if $b$ is odd.
\end{theorem}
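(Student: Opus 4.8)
The plan is to prove both implications by explicit, elementary constructions, handling the parity of $a$ separately; no deep machinery is needed (this is a known result, so a self-contained argument is the goal). First I would isolate the only arithmetic obstruction: in any $KA(a,b)$ the $ab$ entries sum to $a\cdot\frac{b(b+1)}{2}$, and splitting this equally among the $b$ columns forces the common column sum to be $c=\frac{a(b+1)}{2}$. This must be a positive integer, so when $a=2h+1$ is odd the product $a(b+1)$ must be even, which is possible only if $b$ is odd. That settles necessity in the odd case.

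For sufficiency when $a=2h$ is even (with $b$ arbitrary), I would use the $2\times b$ block $B$ with first row $1,2,\dots,b$ and second row $b,b-1,\dots,1$: each row of $B$ is a permutation of $\{1,\dots,b\}$ and each column of $B$ sums to $b+1$. Stacking $h$ copies of $B$ produces a $2h\times b$ array all of whose rows are permutations of $\{1,\dots,b\}$ and all of whose columns sum to $h(b+1)=\frac{2h(b+1)}{2}=c$, i.e.\ a $KA(2h,b)$.

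The substantive case is $a=2h+1$ odd with $b$ odd, where I would first construct a $3\times b$ Kotzig array and then reduce to it. Write $b=2k+1$, take Row~1 $=(1,2,\dots,b)$, take Row~2 to be the cyclic shift $(k+1,k+2,\dots,b,1,2,\dots,k)$, and define Row~3 entrywise by $\mathrm{Row}_3(j)=3(k+1)-\mathrm{Row}_1(j)-\mathrm{Row}_2(j)$, which forces every column sum to equal $3(k+1)=\frac{3(b+1)}{2}$. Rows~1 and~2 are visibly permutations of $\{1,\dots,b\}$; the crux is to show Row~3 is one too, which amounts to checking that the sums $j+\mathrm{Row}_2(j)$, as $j$ runs over $\{1,\dots,b\}$, take each value of the consecutive block $\{k+2,k+3,\dots,3k+2\}$ exactly once. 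This follows from a short case split: for $1\le j\le k+1$ one gets $j+\mathrm{Row}_2(j)=2j+k$, yielding $k+2,k+4,\dots,3k+2$, and for $k+2\le j\le b$ one gets $2j-k-1$, yielding $k+3,k+5,\dots,3k+1$, and the two arithmetic progressions interleave to cover the block. Hence $\mathrm{Row}_3(j)=3k+3-(j+\mathrm{Row}_2(j))$ runs bijectively over $\{1,\dots,b\}$, and we have a $KA(3,b)$. Finally, for $a=2h+1$ with $h\ge1$: if $h=1$ this is the array just built; if $h\ge2$, stack it on top of $h-1$ copies of the block $B$ from the even case to get $2h+1$ rows, still all permutations of $\{1,\dots,b\}$, with column sums $\frac{3(b+1)}{2}+(h-1)(b+1)=\frac{(2h+1)(b+1)}{2}=c$.

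The only real obstacle is verifying that the forced Row~3 of the $3\times b$ array is genuinely a permutation; everything hinges on the interleaving-of-progressions claim above, and once that is in hand the remaining verifications are routine bookkeeping.
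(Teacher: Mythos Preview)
Your argument is correct and self-contained: the divisibility obstruction for odd $a$ is exactly right, the $2\times b$ ``reverse pair'' block gives $KA(2h,b)$ immediately, and your verification that Row~3 of the $3\times b$ array is a permutation via the interleaving of the two arithmetic progressions $\{k+2,k+4,\dots,3k+2\}$ and $\{k+3,k+5,\dots,3k+1\}$ is clean and complete. The stacking to reach $KA(2h+1,b)$ for $h\ge2$ is then routine.

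As for comparison with the paper: there is nothing to compare, because the paper does not prove this statement at all. Theorem~\ref{kotzig1} is quoted as a known result from Marr and Wallis \cite{MR3013201}, with no argument supplied. Your write-up therefore goes beyond what the paper does, providing an elementary proof where the paper merely cites one. Incidentally, the paper later needs exactly your $3\times r$ building block in disguise: its ``quasi Kotzig array'' $QKA(3,r)$ for even $r$ (in the proof of Theorem~\ref{2.2}) is the analogue of your $KA(3,b)$ construction adapted to the case where a genuine Kotzig array cannot exist, and the extension to $QKA(m,r)$ is done by stacking reverse pairs just as you do.
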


\par  In this article, we determine the local antimagic chromatic number of $r$ copies of  $K_{m,n}$  and  complete tripartite graph $K_{1,m,n}$.

\section{Main results}

Let $K_{m,n}$ be a complete bipartite graph. Alike Magic rectangles, nearly magic rectangles can also be used to define labeling for different graphs efficiently.  One can see easily see that if $\chi_{la}(K_{m,n}) = 2$  then there exists a magic rectangle $MR(m,n)$. In that case, since magic rectangles of different parity do not exist, $\chi_{la}(K_{m,n}) \geq 3$. Now without loss of generality let $m$ be even. From Theorem \ref{near1} and \ref{near2} there exists a nearly magic rectangle $NMR(m,n)$ with $m/2$ row sums as $\frac{n(1+mn)-1}{2}$ and $m/2$ row sums as $\frac{n(1+mn)+1}{2}$. Here the column sum for the $NMR(m,n)$ will be a constant $\frac{m(1+mn)}{2}$.  Let $\{v_1,v_2,\ldots,v_m\}$ and $\{u_1,u_2,\ldots,u_n\}$ be vertices in the first and second partition of $K_{m,n}$ respectively. Now if we label the edges incident to the vertex $v_i$ with the $i^{th}$ row of $NMR(m,n)$, we can observe that $\chi_{la}(K_{m,n}) = 3.$
    
\begin{example} The following is an example of $NMR(4,3).$ 
\begin{center}
$\begin{bmatrix}
 3& 12 &5 \\ 
 10&4&6\\
 2& 9 & 8 \\
 11& 1 &7\\
\end{bmatrix} $   
\end{center}

\par Let $\{v_1,v_2,v_3,v_4\}$ and $\{u_1,u_2,u_3\}$ be the vertices in $K_{4,3}$. Label the edges incident on the vertex $v_i$ with $i^{th}$ row of the above $NMR(4,3).$ We can see that the $w(u_i) = 26$ for all $i$ and $w(v_1)=w(v_2) = 20$ and  $w(v_3)=w(v_4) = 19$. Thus, $\chi_{la}(K_{4,3}) = 3$.
\end{example}
\par Next we move on to characterize the local antimagic chromatic number of $r$ copies of $K_{m,n}$ where $r \geq 2.$  It can be observed, that if an $MRS(m,n;r) $ exists then the $t^{th}$ magic rectangle can be used to label the $t^{th}$ copy of the graph $K_{m,n}$ in a similar way as above. Hence $\chi_{la}(rK_{m,n})$ will be equal to 2. Also, if  $\chi_{la}(rK_{m,n}) = 2$, then there should exists an $MRS(m,n;r)$. Thus whenever $MRS(m,n;r)$ doesn't exist we can conclude that $\chi_{la}(rK_{m,n}) \geq 3$.
Thus using Theorem \ref{froncek}, we get the following result.
\begin{theorem}
    Let $m,n >1$ with $m \neq n $. Then $\chi_{la}(rK_{m,n}) = 2$ whenever $m,n,r$ are all odd or $m,n$ are both even with $(m,n) \neq (2,2).$
\end{theorem}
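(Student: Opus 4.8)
The plan is to prove the theorem in two directions, combining the lower-bound discussion already sketched in the text with an explicit construction for the upper bound.

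\medskip

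\textbf{Lower bound.} First I would recall the observation already made: any local antimagic labeling of $rK_{m,n}$ inducing exactly $2$ colors would force, on each copy $K_{m,n}$, all $m$ vertices of the first part to share one color and all $n$ vertices of the second part to share the other (since within a part there are no edges, but every vertex of one part is adjacent to every vertex of the other). Writing the edge-labels incident to the $i$-th vertex of the first part of the $t$-th copy as the $i$-th row of an $m\times n$ array $A^{(t)}$, constancy of the weights of first-part vertices means constant row sums, constancy of second-part weights means constant column sums, and since the labels $1,\dots,mnr$ are used bijectively across all $r$ copies, the collection $\{A^{(1)},\dots,A^{(r)}\}$ is precisely a magic rectangle set $MRS(m,n;r)$ (after relabeling so $m\le n$, which we may do up to swapping parts). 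Hence $\chi_{la}(rK_{m,n})=2$ forces $MRS(m,n;r)$ to exist, so by Theorem~\ref{froncek}, if $m,n,r$ are not all odd and $m,n$ are not both even (or $(m,n)=(2,2)$), then $\chi_{la}(rK_{m,n})\ge 3$.

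\medskip

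\textbf{Upper bound.} Conversely, suppose $m,n,r$ are all odd, or $m,n$ are both even with $(m,n)\neq(2,2)$. Since $m,n>1$, Theorem~\ref{froncek} guarantees that $MRS(m,n;r)$ exists; say it consists of arrays $A^{(1)},\dots,A^{(r)}$ with common row sum $\rho=\frac{n(mnr+1)}{2}$ and common column sum $\sigma=\frac{m(mnr+1)}{2}$. Label the $t$-th copy of $K_{m,n}$ by assigning to the edges incident with the $i$-th vertex $v_i^{(t)}$ of the first part the entries of the $i$-th row of $A^{(t)}$, with the $j$-th such edge (going to the $j$-th vertex $u_j^{(t)}$ of the second part) receiving entry $A^{(t)}_{ij}$. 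This is a bijection $E(rK_{m,n})\to\{1,\dots,mnr\}$ because the $A^{(t)}$ together use each of $1,\dots,mnr$ exactly once. Then $w(v_i^{(t)})=\rho$ for every $i,t$ and $w(u_j^{(t)})=\sigma$ for every $j,t$. Adjacent vertices in $rK_{m,n}$ always lie in opposite parts of the same copy, so it remains only to check $\rho\neq\sigma$; since $m\neq n$ and $mnr+1>0$, indeed $\rho\neq\sigma$. Thus the labeling is local antimagic and induces exactly the two colors $\rho$ and $\sigma$, giving $\chi_{la}(rK_{m,n})\le 2$. Combined with the trivial bound $\chi_{la}(G)\ge\chi(G)=2$ for any bipartite graph with an edge, we conclude $\chi_{la}(rK_{m,n})=2$.

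\medskip

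\textbf{Main obstacle.} There is essentially no deep obstacle here: the content is entirely packaged inside Theorem~\ref{froncek}. The only points requiring care are bookkeeping ones: verifying that the disjoint union of the $r$ arrays really does range bijectively over $\{1,\dots,mnr\}$ (this is the definition of $MRS$), checking that the hypothesis $(m,n)\neq(2,2)$ together with $m,n>1$ puts us inside the existence range of Theorem~\ref{froncek} in the all-odd and all-even cases, and noting that $m\neq n$ is exactly what is needed to separate the two color classes $\rho$ and $\sigma$. I would also remark explicitly that if one only wants $\chi_{la}\le 2$ one does not even need $m\le n$; the labeling is symmetric in the two parts.
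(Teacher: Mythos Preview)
Your proposal is correct and follows essentially the same approach as the paper: both reduce the problem to the existence of a magic rectangle set $MRS(m,n;r)$ via Theorem~\ref{froncek}, using the $t$-th array to label the $t$-th copy of $K_{m,n}$ and observing that $m\neq n$ forces the row and column constants $\rho,\sigma$ to differ. Your write-up is in fact more detailed than the paper's, which dispatches the result in a short paragraph preceding the theorem statement; the only superfluous part is your ``Lower bound'' discussion, since for the equality $\chi_{la}=2$ one only needs the trivial bound $\chi_{la}\ge\chi=2$ (the converse implication you prove is used elsewhere in the paper, but not for this theorem).
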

\begin{theorem}\label{2.2}
    Let $r\geq 2,$ be even and $m,n> 1$ be both odd with $m<n$ and  $m \neq n$, then $\chi_{la}(rK_{m,n}) = 4$. 
\end{theorem}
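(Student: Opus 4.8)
The plan is to prove $\chi_{la}(rK_{m,n})\le 4$ and $\chi_{la}(rK_{m,n})\ge 4$ separately. Throughout set $T=\frac{rmn(rmn+1)}{2}$, the sum of all edge labels; since $r$ is even, $rmn$ is even, so $T=\tfrac r2\,mn\,(rmn+1)$, and because $rmn+1\equiv 1\pmod{mn}$ one has $\gcd(mn,\,rmn+1)=1$ — a coprimality that will drive the lower bound. In each copy of $K_{m,n}$, write $A_t$ for the part of $m$ vertices (all of degree $n$) and $B_t$ for the part of $n$ vertices (all of degree $m$).

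\textbf{Upper bound.} I would exhibit an explicit $4$-colour labeling. As $r$ is even, $r-1$ is odd, so Theorem \ref{froncek} gives a magic rectangle set $MRS(m,n;r-1)$: a family of $r-1$ arrays of order $m\times n$ whose entries partition $\{1,\dots,(r-1)mn\}$, each with every row sum equal to $\rho_0=\frac{n((r-1)mn+1)}{2}$ and every column sum equal to $\sigma_0=\frac{m((r-1)mn+1)}{2}$. Label copy $i$ ($1\le i\le r-1$) by matching the $m$ rows of the $i$-th array to the vertices of $A_i$ and the $n$ columns to the vertices of $B_i$, putting entry $(p,q)$ on the edge between row-vertex $p$ and column-vertex $q$; then each vertex of $A_i$ gets weight $\rho_0$ and each vertex of $B_i$ gets weight $\sigma_0\ (\ne\rho_0)$. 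For copy $r$, take a magic rectangle $MR(m,n)$ (it exists by Theorem \ref{MR}, since $m,n>1$, $mn>4$, $m\equiv n\pmod 2$), add $(r-1)mn$ to every entry so its entry set becomes $\{(r-1)mn+1,\dots,rmn\}$, and label copy $r$ in the same way; its degree-$n$ vertices then get weight $\rho_1=\frac{n((2r-1)mn+1)}{2}$ and its degree-$m$ vertices get weight $\sigma_1=\frac{m((2r-1)mn+1)}{2}$. The combined labeling is locally antimagic and uses only the colours $\rho_0,\sigma_0,\rho_1,\sigma_1$; a short check (using $m\ne n$ and $0<n-m<mn$) shows these four are pairwise distinct, so $\chi_{la}(rK_{m,n})\le 4$.

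\textbf{Lower bound.} First, $\chi_{la}(rK_{m,n})\ge 3$: a $2$-colouring would make, in each copy, one part monochromatic in one colour and the other monochromatic in the other, which assembles into an $MRS(m,n;r)$ — impossible by Theorem \ref{froncek}, as $m$ is odd and $r$ is even. It remains to rule out $3$ colours. Suppose a locally antimagic labeling induces colours $w_1<w_2<w_3$. Since $K_{m,n}$ is complete bipartite, in every copy the colours on $A_t$ are disjoint from those on $B_t$; putting $S_t=\sum_{v\in A_t}w(v)=\sum_{u\in B_t}w(u)$ (the sum of the $mn$ labels used in copy $t$), we get $\sum_{t=1}^r S_t=T$, so the mean weight of the $rm$ degree-$n$ vertices is $\frac{n(rmn+1)}{2}$ and that of the $rn$ degree-$m$ vertices is $\frac{m(rmn+1)}{2}$ — both non-integers, so neither class of vertices is monochromatic. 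I would then classify each copy by how the (two or three) colours it uses split between $A_t$ and $B_t$. Comparing $\sum_{A_t}w$ with $\sum_{B_t}w$ and using $m<n$ forces, for each type, a rigid relation: with both sides monochromatic, $mw_i=nw_j$, so the larger colour sits on $A_t$ and $w_i/w_j=n/m$; with one side monochromatic in $w_i$ (resp. $w_j$) and the other bichromatic, $S_t=mw_i$ (resp. $nw_j$) must lie between the least and greatest value attainable as a sum of $n$ (resp. $m$) colours, which — together with the elementary bounds $\binom{m+1}{2}\le w(u)\le m\,rmn-\binom{m}{2}$ for degree-$m$ vertices and $\binom{n+1}{2}\le w(v)\le n\,rmn-\binom{n}{2}$ for degree-$n$ vertices — both discards several configurations (for instance, a monochromatic $A_t$ cannot be coloured $w_1$, and a monochromatic $B_t$ cannot be coloured $w_3$) and confines each surviving type's ratios $w_2/w_1$ and $w_3/w_2$ to explicit inequalities against $n/m$. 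The final step is to run through the remaining short list of type-combinations that can co-occur over the $r$ copies and, in each, use $\sum_t S_t=T$ together with integrality of the $w_i$, the non-integrality of $\tfrac{n(rmn+1)}2$ and $\tfrac{m(rmn+1)}2$, and $\gcd(mn,\,rmn+1)=1$ to reach a contradiction; this gives $\chi_{la}(rK_{m,n})\ne 3$, and with the upper bound, $\chi_{la}(rK_{m,n})=4$.

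\textbf{Main obstacle.} The hard part is the case analysis in the last step — showing that no admissible assignment of ``copy-types'' is compatible with $\sum_t S_t=T$. The efficient strategy is to prove the single-copy rigidity restrictions first, since they eliminate most type-combinations outright, and only then invoke the global sum identity and the coprimality $\gcd(mn,\,rmn+1)=1$ to finish off the few surviving cases.
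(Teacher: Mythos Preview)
Your upper bound is correct and genuinely different from the paper's. The paper builds $r$ matrices $Z^1,\dots,Z^r$ from scratch via a ``quasi-Kotzig array'' $QKA(m,r)$ layered on top of a blown-up magic rectangle, and then has to verify distinctness of all $rmn$ entries by hand. Your route is much shorter: since $r-1$ is odd and $m,n$ are odd, Theorem~\ref{froncek} already supplies an $MRS(m,n;r-1)$ for the first $r-1$ copies, and a single shifted $MR(m,n)$ handles the last copy. The four weights $\rho_0,\sigma_0,\rho_1,\sigma_1$ are easily seen to be pairwise distinct (the only non-obvious comparison, $\rho_0$ versus $\sigma_1$, follows from $0<n-m<mn$). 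This buys you a cleaner argument with no new combinatorial object; the paper's construction, on the other hand, produces $r$ arrays whose row sums take only two values and whose column sums take only two values \emph{simultaneously}, which is a stronger structural statement than you need here.

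Your lower bound, however, has a real gap. You correctly observe that the non-integrality of $T/(rm)$ and $T/(rn)$ only rules out the two extreme configurations (all $A$-vertices monochromatic, or all $B$-vertices monochromatic), and that a $3$-colouring could in principle mix copy-types---e.g.\ copy~$1$ with $A_1$ monochromatic in $w_3$ and copy~$2$ with $B_2$ monochromatic in $w_1$. Your preliminary restrictions (a monochromatic $A_t$ cannot carry $w_1$; a monochromatic $B_t$ cannot carry $w_3$) are valid, but after that you defer the actual enumeration to ``run through the remaining short list,'' and this list is neither short nor obviously tractable: each copy has six possible $(A\text{-colour set},B\text{-colour set})$ splits, and the global constraint $\sum_t S_t=T$ together with the coprimality $\gcd(mn,rmn+1)=1$ does not by itself kill mixed configurations without further ad~hoc inequalities. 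You have not shown how to close these cases, and this is where the substance of the lower bound lies.

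It is worth noting that the paper's own argument for $\chi_{la}\ge 4$ is exactly the step you flagged as insufficient: it passes directly from ``$T$ is divisible by neither $rm$ nor $rn$'' to ``$\chi_{la}\ge 4$'' without addressing the mixed copy-types at all. So your diagnosis of the obstacle is sharper than the paper's treatment, but neither your proposal nor the paper's proof, as written, actually completes the case analysis needed to exclude three colours.
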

\begin{proof}
Since $MRS(m,n;r)$ doesn't exist for the case when $r$ is even and $m,n$ both odd, with $m \neq n$, we can conclude that  $\chi_{la}(rK_{m,n}) \geq  3$. But one can see that the sum of the entries $1,2,\ldots,rmn$, is $\frac{rmn(rmn+1)}{2}$ and it's clear that the sum $\frac{rmn(rmn+1)}{2}$ is not divisible by either $rm$ or $rn$. Thus making it impossible to create rectangle sets of order $m \times n$ with either constant row sums or constant column sums. Thus we can conclude that $\chi_{la}(rK_{m,n}) \geq  4$. \par Now if construct $r$ matrices $Z^1,Z^2,\ldots, Z^r$ with row sums, say $\rho_1$ and $\rho_2$, and column sums, say $\sigma_1$ and $\sigma_2$, then we can conclude that $\chi_{la}(rK_{m,n}) = 4$. Next, we construct such $r$ copies of $m \times n$ matrices.
\par It is clear from Theorem \ref{kotzig1} that when $m$ is odd and $n$ is even,  $KA(m,n)$ doesn't exist. In that case, the following relaxed version of Kotzig arrays can be constructed.
\par A $3 \times r$ quasi Kotzig array, $QKA(3,r)$ is defined as follows, where $r=2k, k \geq 1$.
\newline$$
\begin{bmatrix}
    1&2&\ldots&k-1&k&k+1&k+2&\ldots&2k-1&2k\\
    2k-1&2k-3 & \ldots&3&1&2k&2k-2&\ldots&4& 2\\
    k+1&k+2 & \ldots&2k-1&2k&1&2&\ldots&k-1& k
\end{bmatrix}$$
Now if $m>3$ and $m= 2s+1, s \geq 2$, we can attach $s-1$ copy of the matrix,
$$
\begin{bmatrix}
    1&2&\ldots&k&k+1&\ldots&2k-1&2k\\
    2k&2k-1 & \ldots&k+1&k&\ldots&2& 1
\end{bmatrix}$$
to a $QKA(3,r)$ to obtain such $QKA(m,r)$. So that the two different column sums of the $m \times r$ quasi Kotzig array will be $\frac{m(r+1)-1}{2}$ and $\frac{m(r+1)+1}{2}$.
\par Now to construct the matrices $Z^1,Z^2,\ldots,Z^r$, first construct the magic rectangle $MR(m,n)= (m_{i,j})$. Next construct an $m \times n$ matrix $W=(w_{i,j})$ with $w_{i,j} = r(m_{i,j}-1)$. The resultant matrix $W$ will have row sums as $\frac{rn(mn-1)}{2}$ and column sums  as $\frac{rm(mn-1)}{2}$.

\par Next construct $r$ new matrices $U^1,U^2,\ldots,U^r$ which will be added to the matrix $W$ to obtain the required matrices $Z^1,Z^2,\ldots,Z^r$, i.e, $Z^t = (z_{i,j}^t)= U^t+W$. Construct a quasi Kotzig array $QKA(m,r)$ of order $m \times r$. Now the $t^{th}$ column of $QKA(m,r)$ is used as the first column of $U^t$ where $1 \leq t \leq r$. The next $m-1$ columns of $U^t$ are placed with a circulant array constructed from the first column. That is, we have $u_{i ,j}^t=u_{(i+j-1)\bmod m,1}^t$ for $j=2,3, \ldots, m$. When $n >m$, the remaining even-numbered columns are filled with the first column of $U^t$ and all other odd columns are filled as $u_{i,j}^t = r+1-u_{i,j-1}^t$.
\par Here, one can see that for each $U^t$, $1\leq t \leq r/2$ the row sums are  $\frac{nr+n-1}{2}$ and for $U^t$ where $\frac{r}{2}+1\leq t \leq r$, the row sums are going to be $\frac{nr+n+1}{2}$. Also for the first $m$ columns of $U^t$ and all the following even numbered columns the column sum is $\frac{m(r+1)-1}{2}$ and for the remaining columns the sum is  $\frac{m(r+1)+1}{2}$, where $1 \leq t \leq r.$ And for the first $m$ columns of $U^t$ and all the following even-numbered columns the column sum is $\frac{m(r+1)+1}{2}$ and for the remaining columns the sum is  $\frac{m(r+1)-1}{2}$, where $\frac{r}{2}+1 \leq t \leq r.$
\par Now by forming the matrices $Z^t$'s as mentioned above, it is clear that for the matrices $Z^t$'s, where $1 \leq t \leq \frac{r}{2}$, the row sums are  $\rho_1 = \frac{rn(mn)-n-1}{2}$ and when $\frac{r}{2}+1 \leq t \leq r$, the row sums are $\rho_2 = \frac{rn(mn)+n+1}{2}$. Further, one can also see that the column sums of the matrices $Z^{t}$'s are 
$\sigma_1 = \frac{rm(mn)+m-1}{2}$ and $\sigma_2 = \frac{rm(mn)+m+1}{2}$.
\par Since  $m<n$, and $\frac{m-1}{2}<\frac{n-1}{2}<\frac{n+1}{2},$ we have $\sigma_1 
 = \frac{m(mnr)}{2}+\frac{m-1}{2}<\frac{n(mnr)}{2}+\frac{n+1}{2} = \rho_2$ and since $\frac{m+1}{2}<\frac{n+1}{2}$, we have $\sigma_2 
 = \frac{m(mnr)}{2}+\frac{m+1}{2}<\frac{n(mnr)}{2}+\frac{n+1}{2} = \rho_2.$ Also, $\rho_1 - \sigma_1 = (n-m)(\frac{mnr+1}{2})-n \geq 2 (\frac{mnr+1}{2})-n  $, which is always greater than 0. Similarly, $\rho_1 - \sigma_2 = (n-m)(\frac{mnr+1}{2})-n-1  \geq 2 (\frac{mnr+1}{2})-n $, which itself is always greater than 0. Thus all the four entries $\rho_1,\rho_2,\sigma_1$ and $\sigma_2$ are distinct.
 \par Now, to check whether the entries in the matrices are all distinct. We have that for any pairs $(i, j) \neq\left(i^{\prime}, j^{\prime}\right)$, $\left|w_{i,j}-w_{i^{\prime}, j^{\prime}}\right| \geq r$ and $\left|u_{i,j}^s-u_{i^{\prime},j^{\prime}}^t\right| \leq r-1$. Therefore, if $z_{i,j}^s=z_{i^{\prime},j^{\prime}}^t$, we must have $(i, j)=\left(i^{\prime}, j^{\prime}\right)$. Suppose that $z_{i,j}^s=z_{i,j}^t$ and $s \neq t$. Then we have $z_{i,j}^s-z_{i,j}^t=0$ and
$$
z_{i,j}^s-z_{i,j}^t=\left(w_{i,j}+u_{i, j}^s\right)-\left(w_{i,j}+u_{i,j}^t\right)=u_{i,j}^s-u_{i,j}^t=0
$$
Hence $u_{i,j}^s=u_{i,j}^t$. Now, if $j=1$ or $j>m$ and $j$ is even, then $u=u_{i,1}^s=u_{i,1}^t$. But, $u_{i,1}^s$ is an entry of the quasi $\operatorname{Kotzig}$ array $\operatorname{QKA}(m,n)$, namely $q_{i,s}$. Similarly, we have  $u_{i,1}^t=q_{i,t}$. So we have $q_{i,s} = q_{i,t}$, which is not true for a $\operatorname{QKA}(a, b)$, since  $s \neq t$.
Similarly for the case when $j>m$ and $j$ odd, we have $u=r+1-q_{i,s}=r+1-q_{i,t}$ which implies $q_{i,s}=q_{i,t}$, again a  contradiction. Finally, if $2 \leq j \leq m$, then $u=u_{l, 1}^s=u_{l,1}^t$ for some $l\neq i$ and we have $q_{l,s}=q_{l,t}$, a contradiction.
\end{proof}
\begin{example}
    For the graph $4K_{7,11}$ the local antimagic chromatic number is 3. Let $QKA(7,4) = Q$. Here

    \begin{minipage}{9cm}
{
$$MR(7,11) =  \begin{bmatrix}
 77 & 57 & 43 & 56 & 15 & 1 & 64 & 50 & 36 & 22 & 8 \\
 6 & 9 & 34 & 23 & 30 & 37 & 44 & 51 & 58 & 65 & 72 \\
 3 & 10 & 17 & 33 & 31 & 38 & 45 & 54 & 59 & 66 & 73 \\
 39 & 46 & 53 & 60 & 67 & 74 & 4 & 11 & 18 & 25 & 32 \\
 75 & 61 & 47 & 24 & 19 & 5 & 68 & 52 & 40 & 26 & 12 \\
 2 & 20 & 16 & 48 & 62 & 76 & 13 & 27 & 41 & 55 & 69 \\
 71 & 70 & 63 & 29 & 49 & 42 & 35 & 28 & 21 & 14 & 7 
\end{bmatrix}$$}
\end{minipage}
\begin{minipage}{7cm}
$$Q= \begin{bmatrix}
 1 & 2 & 3 & 4 \\
3 & 1 & 4 & 2 \\
 3 & 4 & 1 & 2 \\
1 & 2 & 3 & 4 \\
 4 & 3 & 2 & 1 \\
 1 & 2 & 3 & 4 \\
 4 & 3 & 2 & 1 
\end{bmatrix}$$
\end{minipage} $$ W = \begin{bmatrix}
 304 & 224 & 168 & 220 & 56 & 0 & 252 & 196 & 140 & 84 & 28 \\
20 & 32 & 132 & 88 & 116 & 144 & 172 & 200 & 228 & 256 & 284 \\
8 & 36 & 64 & 128 & 120 & 148 & 176 & 212 & 232 & 260 & 288 \\
152 & 180 & 208 & 236 & 264 & 292 & 12 & 40 & 68 & 96 & 124 \\
296 & 240 & 184 & 92 & 72 & 16 & 268 & 204 & 156 & 100 & 44 \\
 4 & 76 & 60 & 188 & 244 & 300 & 48 & 104 & 160 & 216 & 272 \\
 280 & 276 & 248 & 112 & 192 & 164 & 136 & 108 & 80 & 52 & 24 
 \end{bmatrix}$$
\begin{minipage}{7cm}
{
$$U^1 = \begin{bmatrix}
 1 & 3 & 3 & 1 & 4 & 1 & 4 & 1 & 4 & 1 & 4 \\
 3 & 3 & 1 & 4 & 1 & 4 & 1 & 3 & 2 & 3 & 2 \\
 3 & 1 & 4 & 1 & 4 & 1 & 3 & 3 & 2 & 3 & 2 \\
 1 & 4 & 1 & 4 & 1 & 3 & 3 & 1 & 4 & 1 & 4 \\
 4 & 1 & 4 & 1 & 3 & 3 & 1 & 4 & 1 & 4 & 1 \\
 1 & 4 & 1 & 3 & 3 & 1 & 4 & 1 & 4 & 1 & 4 \\
 4 & 1 & 3 & 3 & 1 & 4 & 1 & 4 & 1 & 4 & 1 
\end{bmatrix}$$}
\end{minipage}
\begin{minipage}{8cm}
$$U^2 = \begin{bmatrix}
 2 & 1 & 4 & 2 & 3 & 2 & 3 & 2 & 3 & 2 & 3 \\
 1 & 4 & 2 & 3 & 2 & 3 & 2 & 1 & 4 & 1 & 4 \\
 4 & 2 & 3 & 2 & 3 & 2 & 1 & 4 & 1 & 4 & 1 \\
 2 & 3 & 2 & 3 & 2 & 1 & 4 & 2 & 3 & 2 & 3 \\
 3 & 2 & 3 & 2 & 1 & 4 & 2 & 3 & 2 & 3 & 2 \\
 2 & 3 & 2 & 1 & 4 & 2 & 3 & 2 & 3 & 2 & 3 \\
 3 & 2 & 1 & 4 & 2 & 3 & 2 & 3 & 2 & 3 & 2 
\end{bmatrix}$$
\end{minipage}
\begin{minipage}{7cm}
{
$$U^3 = \begin{bmatrix}
 3 & 4 & 1 & 3 & 2 & 3 & 2 & 3 & 2 & 3 & 2 \\
 4 & 1 & 3 & 2 & 3 & 2 & 3 & 4 & 1 & 4 & 1 \\
 1 & 3 & 2 & 3 & 2 & 3 & 4 & 1 & 4 & 1 & 4 \\
 3 & 2 & 3 & 2 & 3 & 4 & 1 & 3 & 2 & 3 & 2 \\
 2 & 3 & 2 & 3 & 4 & 1 & 3 & 2 & 3 & 2 & 3 \\
 3 & 2 & 3 & 4 & 1 & 3 & 2 & 3 & 2 & 3 & 2 \\
 2 & 3 & 4 & 1 & 3 & 2 & 3 & 2 & 3 & 2 & 3 
\end{bmatrix}$$}
\end{minipage}
\begin{minipage}{8cm}
$$U^4 = \begin{bmatrix}
 4 & 2 & 2 & 4 & 1 & 4 & 1 & 4 & 1 & 4 & 1 \\
 2 & 2 & 4 & 1 & 4 & 1 & 4 & 2 & 3 & 2 & 3 \\
 2 & 4 & 1 & 4 & 1 & 4 & 2 & 2 & 3 & 2 & 3 \\
 4 & 1 & 4 & 1 & 4 & 2 & 2 & 4 & 1 & 4 & 1 \\
 1 & 4 & 1 & 4 & 2 & 2 & 4 & 1 & 4 & 1 & 4 \\
 4 & 1 & 4 & 2 & 2 & 4 & 1 & 4 & 1 & 4 & 1 \\
 1 & 4 & 2 & 2 & 4 & 1 & 4 & 1 & 4 & 1 & 4 
\end{bmatrix}$$
\end{minipage}

$$Z^1 = \begin{bmatrix}
 305 & 227 & 171 & 221 & 60 & 1 & 256 & 197 & 144 & 85 & 32  \\
 23 & 35 & 133 & 92 & 117 & 148 & 173 & 203 & 230 & 259 & 286  \\
 11 & 37 & 68 & 129 & 124 & 149 & 179 & 215 & 234 & 263 & 290  \\
 153 & 184 & 209 & 240 & 265 & 295 & 15 & 41 & 72 & 97 & 128  \\
 300 & 241 & 188 & 93 & 75 & 19 & 269 & 208 & 157 & 104 & 45  \\
 5 & 80 & 61 & 191 & 247 & 301 & 52 & 105 & 164 & 217 & 276  \\
 284 & 277 & 251 & 115 & 193 & 168 & 137 & 112 & 81 & 56 & 25
\end{bmatrix}$$

$$Z^2  = \begin{bmatrix}
 306 & 225 & 172 & 222 & 59 & 2 & 255 & 198 & 143 & 86 & 31 \\
 21 & 36 & 134 & 91 & 118 & 147 & 174 & 201 & 232 & 257 & 288 \\
 12 & 38 & 67 & 130 & 123 & 150 & 177 & 216 & 233 & 264 & 289 \\
 154 & 183 & 210 & 239 & 266 & 293 & 16 & 42 & 71 & 98 & 127 \\
 299 & 242 & 187 & 94 & 73 & 20 & 270 & 207 & 158 & 103 & 46 \\
 6 & 79 & 62 & 189 & 248 & 302 & 51 & 106 & 163 & 218 & 275 \\
 283 & 278 & 249 & 116 & 194 & 167 & 138 & 111 & 82 & 55 & 26 
\end{bmatrix}$$
$$Z^3 = \begin{bmatrix}
307 & 228 & 169 & 223 & 58 & 3 & 254 & 199 & 142 & 87 & 30 \\
 24 & 33 & 135 & 90 & 119 & 146 & 175 & 204 & 229 & 260 & 285 \\
 9 & 39 & 66 & 131 & 122 & 151 & 180 & 213 & 236 & 261 & 292 \\
155 & 182 & 211 & 238 & 267 & 296 & 13 & 43 & 70 & 99 & 126 \\
 298 & 243 & 186 & 95 & 76 & 17 & 271 & 206 & 159 & 102 & 47 \\
 7 & 78 & 63 & 192 & 245 & 303 & 50 & 107 & 162 & 219 & 274 \\
 282 & 279 & 252 & 113 & 195 & 166 & 139 & 110 & 83 & 54 & 27 
\end{bmatrix}$$

$$Z^4 = \begin{bmatrix}
 308 & 226 & 170 & 224 & 57 & 4 & 253 & 200 & 141 & 88 & 29 \\
 22 & 34 & 136 & 89 & 120 & 145 & 176 & 202 & 231 & 258 & 287 \\
 10 & 40 & 65 & 132 & 121 & 152 & 178 & 214 & 235 & 262 & 291 \\
 156 & 181 & 212 & 237 & 268 & 294 & 14 & 44 & 69 & 100 & 125 \\
 297 & 244 & 185 & 96 & 74 & 18 & 272 & 205 & 160 & 101 & 48 \\
 8 & 77 & 64 & 190 & 246 & 304 & 49 & 108 & 161 & 220 & 273 \\
 281 & 280 & 250 & 114 & 196 & 165 & 140 & 109 & 84 & 53 & 28 
\end{bmatrix}$$
Here $Z^1$ and $Z^2$ matrices have row sum 1699 and $Z^3$ and $Z^4$ matrices have row sum 1700. Also, the column sums are 1081 and 1082.
\end{example}

\begin{theorem}
    Let $m\geq 4$ be even with $r \geq 1$. Then $\chi_{la}(rK_{m,m}) = 3.$
\end{theorem}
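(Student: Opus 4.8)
\emph{Strategy.} I would prove the two bounds $\chi_{la}(rK_{m,m})\ge 3$ and $\chi_{la}(rK_{m,m})\le 3$ separately; the second is the substantive part and is built by lifting a single‑copy $3$‑colouring of $K_{m,m}$ to all $r$ copies by means of a Kotzig array.

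\emph{Lower bound.} Suppose a local antimagic labeling $f$ of $rK_{m,m}$ induces only two colours. Fix one copy, with parts $X,Y$, $|X|=|Y|=m$. Since each vertex of $X$ is adjacent to every vertex of $Y$, the part $Y$ cannot use both colours, so $Y$ is monochromatic, hence so is $X$ with the other colour; say every vertex of $X$ has weight $p$ and every vertex of $Y$ weight $q$, $p\ne q$. But in the bipartite graph $K_{m,m}$ each edge meets $X$ exactly once, so $\sum_{x\in X}w(x)=\sum_{e}f(e)=\sum_{y\in Y}w(y)$, whence $mp=mq$ and $p=q$, a contradiction. (Equivalently, a $2$‑colouring would produce an $MRS(m,m;r)$, whose common row and column constant $\tfrac{m(m^2r+1)}{2}$ cannot properly colour $K_{m,m}$.) Hence $\chi_{la}(rK_{m,m})\ge 3$.

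\emph{Upper bound: the two ingredients.} First, a single‑copy template. Since $\chi_{la}(K_{m,m})=3$ \cite{lau}, and since in any $3$‑colouring of $K_{m,m}$ one part is forced to be monochromatic with weight $C:=\tfrac{m(m^2+1)}{2}$ while the other part attains exactly two weights, both different from $C$, there is an $m\times m$ array $b=(b_{i,j})$ with entries $1,\dots,m^2$ each used once, all row sums equal to $C$, and column sums taking exactly two values $\sigma_1\ne\sigma_2$, with $\sigma_1,\sigma_2\ne C$. (Such a $b$ can also be produced directly, by perturbing a magic rectangle $MR(m,m)$ — redistributing entries within rows so that every column sum is pushed off $C$ onto one of two nearby values — which is the step for which $m\ge4$ leaves enough room.) Second, since $m$ is even, Theorem~\ref{kotzig1} provides a Kotzig array $KA(m,r)=Q=(q_{i,t})$ whose rows are permutations of $\{1,\dots,r\}$ and all of whose column sums equal $\kappa:=\tfrac{m(r+1)}{2}$.

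\emph{Splicing and conclusion.} For $t=1,\dots,r$ set $c^t_{i,j}=q_{\pi_j(i),t}$, where $\pi_j(i)=((i+j-2)\bmod m)+1$ is the cyclic shift by $j-1$; then each column of $c^t$ is a rotation of column $t$ of $Q$ and each row of $c^t$ runs over all entries of that column, so every row sum and column sum of $c^t$ equals $\kappa$, while for each $(i,j)$ the sequence $(c^1_{i,j},\dots,c^r_{i,j})$ is row $\pi_j(i)$ of $Q$, a permutation of $\{1,\dots,r\}$. Put $z^t_{i,j}=r(b_{i,j}-1)+c^t_{i,j}$ and label the edge $v^t_i u^t_j$ of the $t$‑th copy by $z^t_{i,j}$. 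As $b$ is a bijection onto $\{1,\dots,m^2\}$, the intervals $\{r(b_{i,j}-1)+1,\dots,r(b_{i,j}-1)+r\}$ partition $\{1,\dots,rm^2\}$, and the permutation property shows the $r$ labels $z^1_{i,j},\dots,z^r_{i,j}$ exhaust one such interval, so $(i,j,t)\mapsto z^t_{i,j}$ is a bijection onto $\{1,\dots,rm^2\}$ and $f$ is a genuine local antimagic labeling. Its weights are $w(v^t_i)=r(C-m)+\kappa$ (which simplifies to $S:=\tfrac{m(rm^2+1)}{2}$) for every $v^t_i$, and $w(u^t_j)=r(\sigma_k-m)+\kappa$, $k\in\{1,2\}$, for every $u^t_j$; these are three distinct numbers (the two $u$‑weights differ because $\sigma_1\ne\sigma_2$, and each differs from $S$ because $\sigma_k\ne C$), and since every edge joins a $v^t_i$ to a $u^t_j$, adjacent vertices get different colours. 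Hence $\chi_{la}(rK_{m,m})\le 3$, and equality follows. The one delicate ingredient is the template $b$: one needs exactly two column sums and neither equal to $C$, so a single swap in a magic rectangle does not suffice (it leaves most columns at $C$, clashing with the rows) and every column must be perturbed simultaneously while all row sums stay $C$ — guaranteeing its existence via $\chi_{la}(K_{m,m})=3$ is the cleanest route; all the remaining checks (balance of $c^t$, bijectivity of $z^t_{i,j}$, the two weight identities) are routine.
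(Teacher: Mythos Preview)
Your proof is correct, and it takes a genuinely different route from the paper's.

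The paper constructs the $r$ required $m\times m$ arrays directly: it takes an $MRS(m,2;r)$ and an $MRS(m,m-2;r)$ (both exist since $m$ is even and $m-2\ge 2$), shifts the latter by $2rm$, and glues the $i$-th rectangles side by side. This yields constant row sums $\rho=\tfrac{m(rm^2+1)}{2}$ and exactly two column sums $\sigma_1=\tfrac{m(2rm+1)}{2}$ (first two columns) and $\sigma_2=\rho+rm^2$ (remaining $m-2$ columns), all three distinct. Your approach instead reduces to the single-copy case: you pull a $3$-colouring template $b$ for $K_{m,m}$ from \cite{lau}, then lift it to $r$ copies via the Kotzig blow-up $z^t_{i,j}=r(b_{i,j}-1)+c^t_{i,j}$, exploiting that $KA(m,r)$ exists for all $r$ when $m$ is even.

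What each buys: the paper's construction is self-contained and fully explicit (one can read off the labels), and it avoids invoking the $r=1$ result as a black box. Your approach is more modular and conceptually cleaner --- it is exactly the Kotzig-lifting mechanism the paper itself uses in Theorem~\ref{2.4} for odd $m$, so you have in effect shown that the same machinery handles the even case too, once one imports the single-copy template. Your structural observation that any $3$-weight local antimagic labeling of $K_{m,m}$ must have one part monochromatic at $C=\tfrac{m(m^2+1)}{2}$ and the other at two values off $C$ is what makes the black-box citation usable, and it is correct. The only place you lean on $m\ge 4$ is implicitly through \cite{lau} (and indeed $\chi_{la}(K_{2,2})=3$ as well, so the argument even extends to $m=2$, though the paper's gluing does not since $MRS(2,0;r)$ is undefined).
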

\begin{proof}
Clearly, $\chi_{la}(rK_{m,m}) \geq 3$. Because if $\chi_{la}(rK_{m,m}) =2$, then an $MRS(m,m;r)$ will exist with different row sum and column sum, which is impossible.  
\par The existence of $r, m\times m$ matrices, $Z^1,Z^2,\ldots,Z^r$ with entries $1,2,\ldots,rm^2$ for which the row sums are a single constant $\rho_1$ and column sums are either of $\sigma_1$ and $\sigma_2$ will clearly imply the result.
\par To construct such matrices, first construct $MRS(m,2;r)$ using Theorem \ref{froncek} for which the row sums are $\frac{2(2rm+1)}{2}$ and the column sums are $\frac{m(2rm+1)}{2}$.
Similarly, construct $MRS(m,m-2;r)$ and add $2rm$ to every entry in each of the $r$ matrices of $MRS(m,m-2;r)$. Then for the resultant matrices the row sums will be $\frac{(m-2)(rm(m-2)+1)}{2}+2rm^2-4rm$ and the column sums will be $\frac{m(rm(m-2)+1)}{2}+2rm^2$. 
\par Now glue the $i^{th}$ matrix in $MRS(m,2;r)$ with the $i^{th}$ matrix in $MRS(m,m-2;r)$, where $i=1,2,\ldots,r$ to form the required matrices $Z^1,Z^2,\ldots,Z^r$. Hence, the row sums of these matrices are $$\rho_1 = \frac{2(2rm+1)}{2}+\frac{(m-2)(rm(m-2)+1)}{2}+(2rm^2-4rm) = \frac{m(rm^2+1)}{2},$$ and the column sums are $\sigma_1 = \frac{m(2rm+1)}{2}$ and 
$\sigma_2 = \frac{m(rm^2+1)}{2}+rm^2. $
\end{proof}
\begin{theorem}\label{2.4}
    Let $m\geq 3$ and $r\geq 1$ be odd integers, then $\chi_{la}(rK_{m,m}) = 3$.
\end{theorem}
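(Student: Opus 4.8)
The plan is to establish $\chi_{la}(rK_{m,m})\ge 3$ and $\chi_{la}(rK_{m,m})\le 3$ separately. For the lower bound I would argue as in the preceding theorems: a $2$-colouring would, on each (connected, bipartite) copy of $K_{m,m}$, have to use the two parts as its colour classes, so the $m\times m$ array of edge labels on a copy would be simultaneously row-magic and column-magic; but then its constant row sum equals its constant column sum, forcing the two colours to coincide — impossible. Together with $\chi_{la}(rK_{m,m})\ge\chi(rK_{m,m})=2$ this gives $\chi_{la}(rK_{m,m})\ge 3$. For the upper bound it suffices to exhibit $m\times m$ integer arrays $Z^{1},\dots,Z^{r}$ whose entries together are exactly $1,\dots,rm^{2}$, such that each $Z^{t}$ has all column sums equal to a single value $\sigma$ and all row sums equal to one of two values $\rho_{1}\neq\rho_{2}$ with $\sigma\notin\{\rho_{1},\rho_{2}\}$: labelling the edges of the $t$-th copy of $K_{m,m}$ so that the edge joining vertex $i$ of the first part to vertex $j$ of the second receives $Z^{t}_{i,j}$ then yields a local antimagic labelling using the three colours $\rho_{1},\rho_{2},\sigma$.

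To build the $Z^{t}$'s I would write $Z^{t}=W+U^{t}$. For $W$ take $W=r(V-J)$ with $J$ the all-ones $m\times m$ matrix and $V$ an $m\times m$ array on $\{1,\dots,m^{2}\}$ whose column sums are all $\tfrac{m(m^{2}+1)}{2}$ and whose row sums take exactly two distinct values $\tilde\rho_{1}\neq\tilde\rho_{2}$ (neither equal to the average $\tfrac{m(m^{2}+1)}{2}$, since there are exactly two): such a $V$ is precisely the array (transposed if necessary) of a local antimagic $3$-colouring of $K_{m,m}$, one part of which is monochromatic with forced weight $\tfrac{m(m^{2}+1)}{2}$ and the other bichromatic, so its existence follows from $\chi_{la}(K_{m,m})=3$, the $m=n$ case of \cite{lau}. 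For $U^{t}$ fix a Kotzig array $K=KA(m,r)$, which exists because $m$ and $r$ are both odd (Theorem~\ref{kotzig1}), and set $U^{t}_{i,j}=K_{(i+j)\bmod m,\,t}$ (row index of $K$ read modulo $m$). Then every row and every column of every $U^{t}$ runs over an entire column of $K$, hence has sum $\tfrac{m(r+1)}{2}$; and for each fixed cell $(i,j)$ the values $U^{1}_{i,j},\dots,U^{r}_{i,j}$ are the $((i+j)\bmod m)$-th row of $K$, i.e.\ a permutation of $\{1,\dots,r\}$.

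It remains to check that this works. Since the entries of $W$ are $0,r,2r,\dots,(m^{2}-1)r$, each occurring once, and the $r$ values in a fixed cell of the $U^{t}$'s are $1,\dots,r$, the entries of the $Z^{t}$'s in a fixed cell form an interval of $r$ consecutive integers, and over the $m^{2}$ cells these intervals tile $\{1,\dots,rm^{2}\}$. Each $Z^{t}$ has constant column sum $\sigma=\tfrac{rm(m^{2}-1)}{2}+\tfrac{m(r+1)}{2}$ and row sums $\rho_{k}=r(\tilde\rho_{k}-m)+\tfrac{m(r+1)}{2}$ $(k=1,2)$, and $\sigma-\rho_{k}=r\bigl(\tfrac{m(m^{2}+1)}{2}-\tilde\rho_{k}\bigr)\neq0$, as required. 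The one place where genuine work is needed is the base array $V$ — equivalently the $r=1$ case — since the parity of $m$ rules out an ordinary magic rectangle here; if one wants an argument that does not quote \cite{lau}, I would construct $V$ directly, for instance by starting from the odd magic square of Theorem~\ref{odd} (or Remark~\ref{oddrk}) and cyclically shifting suitable groups of its columns by carefully chosen amounts, so that the affected row sums collapse onto exactly two values while all column sums are preserved; verifying that precisely two values occur (not one, and not $m$) and that neither equals $\tfrac{m(m^{2}+1)}{2}$ is the only computational part, and is where ``$m$ odd'' enters.
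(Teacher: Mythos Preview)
Your proposal is correct and follows essentially the same architecture as the paper: the same lower-bound argument, the same decomposition $Z^{t}=W+U^{t}$ with $W=r(V-J)$ and the $U^{t}$ obtained by circulating the columns of a Kotzig array $KA(m,r)$, and the same distinctness verification. The only difference is the base array $V$ (the paper's $M^{*}$): the paper constructs it explicitly by cyclically shifting the middle column of the De~la~Loub\`ere $m\times m$ magic square by one position (that column is the arithmetic progression $1,m{+}2,\dots,m^{2}$, so the shift gives one row with sum $\tfrac{m(m^{2}+1)}{2}+m^{2}-1$ and $m-1$ rows with sum $\tfrac{m(m^{2}+1)}{2}-(m{+}1)$), whereas you obtain $V$ nonconstructively---but validly---by invoking $\chi_{la}(K_{m,m})=3$ from \cite{lau} and observing that any optimal labeling must have one monochromatic part with weight $\tfrac{m(m^{2}+1)}{2}$ and one bichromatic part.
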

\begin{proof}
    Clearly, since $r$ is odd, $\chi_{la}(rK_{m,m}) \geq 3$. Because if $\chi_{la}(rK_{m,m}) =2$, then an $MRS(m,m;r)$ will exist with different row sum and column sum, which is impossible. Now, to show that  $\chi_{la}(rK_{m,m}) \leq 3$, we proceed as follows.
    \par
 First, construct an $m \times m$ magic square, say $M$, using the De la Loubère method (see \cite{Siam}), which is as follows. Assume $m=2n+1$ for some positive integer $n$.  Fill the $(1,n+1)^{\text{th}}$ entry of $M$ with 1. Once a number is fixed, move one step diagonally up and right to place the next number. If a move leaves $M$, consider the extra cells above the first row to be the same as the last row of $M$ and the cells to the right of the last column of $M$ to be the same as the cells of the first column of $M$. Also, move one step vertically down if a move encounters a filled cell. The matrix $M$ constructed as above is an $m \times m$ magic square with row sum and column sum equal to the magic number $\frac{1}{2}m(m^2+1)$. Also, it is easy to see that the elements of the $(n+1)^{\text{th}}$ column of $M$ form a finite arithmetic progression having the first term 1, common difference $m+1$ and the last term $m^2$.
 \par 
 Now, the magic square $M=(m_{i,j})$ is modified to get another matrix, say $M^* = (m^*_{i,j})$  as, \begin{equation*}
 m^*_{i,j}=\left\{ \begin{array}{ll}
      m_{i,j}& \text{if}\; j\neq n+1,  \\
     m_{i-1,j} & \text{if}\; j=n+1,\;\& \; i\neq 1,\\
     m_{n+1,n+1}& \text{if}\; j=n+1,\;\& \; i= 1.
 \end{array}\right.
 \end{equation*}
The above modification does not make any changes to the column sums, but the row sums will change to $\frac{1}{2}m(m^2+1)+m^2-1$ for the first row and to $\frac{1}{2}m(m^2+1)-m-1$, for all other rows.
\par Next step is to construct a set of $r$ matrices, say $Z^1, Z^2,\dots, Z^r$ having entries $1,2,3,\dots, $ $rm^2$, each appearing once, with each matrix having all its column sums equal to $\sigma=\dfrac{m(rm^2+1)}{2}$ and the row sums equal to $\rho_1=\dfrac{m(rm^2+1)}{2}+r(m^2-1)$ for the first row and $\rho_2=\dfrac{m(rm^2+1)}{2}-r(m+1)$ for all other rows. For that, a Kotzig array $KA(m,r)$ having the row sums $\dfrac{r(r+1)}{2}$ and column sums $\dfrac{m(r+1)}{2}$ is considered. Then, $r$ matrices, namely $U^1, U^2,\dots, U^r$, are obtained in such a way that the first column of $U^t = (u^t_{i,j}),t=1,2,\dots, r$, is the same as the $t^{\text{th}}$ column of $KA(m,r)$. The next $m-1$ columns are obtained iteratively, such that $u^t_{i,j}=u^t_{i+1,j-1}, i=1,2,3,\dots,m-1; j=2,3,\dots,m$ and $u^t_{i,j}=u^t_{1,j-1}, i=m; j=2,3,\dots,m$. Hence, each matrix $U^t,t=1,2,\dots, r$ have both row sums and column sums equal to $\dfrac{m(r+1)}{2}$. Now, the matrices $Z^1, Z^2,\dots, Z^r$ having the desired row and column sums are obtained such that $Z^t=U^t+W$ for $t=1,2,\dots, r,$ where $W=(w_{i,j})$ with $w_{i,j} = r(m^*_{i,j}-1)$.
\par 
Now, what remains is to show that the entries of the matrices $Z^1, Z^2,\dots Z^r$ consist of all the integers $1,2,\dots,rm^2$, such that each of these integers appears exactly once. First, consider two unequal pairs, say $(i,j)\;\text{and}\; (i',j')$. Then $z^s_{i,j}=z^t_{i',j'}$ if and only if $w_{i,j}-w_{i',j'}=u^t_{i',j'}-u^s_{i,j}$, which is impossible since it is obvious that $|w_{i,j}-w_{i',j'}|\geq r$ and $|u^t_{i',j'}-u^s_{i,j}|\leq r-1$. Hence, if $(i,j)\neq (i',j')$ then $z^s_{i,j}\neq z^t_{i',j'}$. Now, consider the numbers $z^s_{i,j}\; \text{and}\;z^t_{i,j}$, where $s\neq t$. Then,   $z^s_{i,j}=z^t_{i,j}$ if and only if $u^s_{i,j}=u^t_{i,j}$. Since these entries are in the same row and column, they will be in the same row of the Kotzig array $KA(m,r)$. Hence, $u^s_{i,j}$ can not be equal to $u^t_{i,j}$ as there are no identical entries in a row of a Kotzig array.
\end{proof}
\begin{theorem}
    Let $m\geq 3$ be odd, and  $r\geq 2$ be even. Then $3\leq \chi_{la}(rK_{m,m})\leq 6$.
\end{theorem}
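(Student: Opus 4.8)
The plan is as follows. For the lower bound, observe that a local antimagic labelling of $rK_{m,m}$ using only two colours would, in each copy, colour one part of the bipartition with one colour and the other part with the other colour, since a connected bipartite graph has an essentially unique proper $2$-colouring. Hence the edge labels of that copy form an $m\times m$ array whose row sums are all equal to one colour and whose column sums are all equal to the other. Summing all entries of that array in two ways gives $m$ times the first colour $=$ $m$ times the second colour, forcing the two colours to coincide, which is absurd. (Equivalently, this is the non-existence of $MRS(m,m;r)$ invoked in the proof of Theorem \ref{2.4}.) Thus $\chi_{la}(rK_{m,m})\ge 3$.

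For the upper bound I would construct $r$ arrays $Z^{1},\dots,Z^{r}$ of order $m\times m$ whose entries together are exactly $1,2,\dots,rm^{2}$, such that each $Z^{t}$ has all column sums equal to a constant $C_t$ and all row sums equal to one of two values $A_t,B_t$, with $A_t\ne C_t$ and $B_t\ne C_t$; labelling the edges at the $i$-th left vertex of the $t$-th copy of $K_{m,m}$ by the $i$-th row of $Z^{t}$ (so the edges at the $j$-th right vertex get the $j$-th column) then yields a local antimagic labelling, proper within each copy and with no constraints between copies. The construction imitates that of Theorem \ref{2.4}, but since a Kotzig array $KA(m,r)$ does not exist for $m$ odd and $r$ even (Theorem \ref{kotzig1}), the Kotzig array is replaced by the quasi--Kotzig array $QKA(m,r)$ built in the proof of Theorem \ref{2.2}. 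Concretely: take a De la Loub\`ere magic square $M$ of order $m$ (as in the proof of Theorem \ref{2.4}) and form $M^{*}$ by cyclically shifting its middle column downward by one; this preserves the entry set $\{1,\dots,m^{2}\}$ and all column sums, while the row sums now take two values, the first row differing from the rest. Set $W=(w_{i,j})$ with $w_{i,j}=r(m^{*}_{i,j}-1)$, so distinct entries of $W$ differ by at least $r$. Take $QKA(m,r)$, each of whose rows is a permutation of $\{1,\dots,r\}$ and whose columns have one of the two possible sums $\tfrac{m(r+1)-1}{2}$ or $\tfrac{m(r+1)+1}{2}$, and for each $t$ let $U^{t}$ be the $m\times m$ circulant whose first column is the $t$-th column of $QKA(m,r)$; then every row and every column of $U^{t}$ sums to the common value $c_t\in\{\tfrac{m(r+1)-1}{2},\tfrac{m(r+1)+1}{2}\}$. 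Finally put $Z^{t}=W+U^{t}$.

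Three routine checks then finish the argument. The arithmetic gives $C_t=\tfrac{rm(m^{2}-1)}{2}+c_t$ for all column sums, $A_t=C_t+r(m^{2}-1)$ for the first row and $B_t=C_t-r(m+1)$ for the other rows, so $A_t\ne C_t\ne B_t$ (using $m\ge 3$, $r\ge 2$) and the colouring is proper inside each copy; moreover $c_t$ takes only two values, hence $A_t,B_t,C_t$ each take only two values and at most $6$ colours occur overall. For distinctness of the $rm^{2}$ entries, if $z^{s}_{i,j}=z^{t}_{i',j'}$ then $|w_{i,j}-w_{i',j'}|\ge r$ together with $|u^{s}_{i,j}-u^{t}_{i',j'}|\le r-1$ force $(i,j)=(i',j')$, hence $u^{s}_{i,j}=u^{t}_{i,j}$; but $u^{t}_{i,j}$ is the entry of $QKA(m,r)$ in a row depending only on $(i,j)$ and in column $t$, and rows of a quasi--Kotzig array have pairwise distinct entries, so $s=t$. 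Thus the $Z^{t}$ realise each of $1,\dots,rm^{2}$ exactly once, and the induced labelling shows $\chi_{la}(rK_{m,m})\le 6$, completing the proof.

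The only real obstacle is the first step of the construction: one must break the symmetry between row sums and column sums inside each copy — otherwise both parts receive the same colour — while simultaneously keeping all labels distinct and the total number of colours bounded. Modifying the magic square handles the first two requirements, and the fact that the only available substitute for $KA(m,r)$, namely the quasi--Kotzig array, carries two distinct column sums is exactly what doubles the three colour families $A,B,C$ and produces the bound $6$; whether a sharper construction attains the expected value $3$ is left open.
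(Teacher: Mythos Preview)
Your proposal is correct and follows exactly the approach the paper intends: the paper's own proof is a one-line reference to ``the proof technique of Theorem \ref{2.4} using $QKA(m,r)$ instead of $KA(m,r)$'', and you have faithfully expanded precisely that construction, including the De la Loub\`ere square, the cyclic shift of the middle column to produce $M^{*}$, the scaling $W=r(M^{*}-J)$, the circulant $U^{t}$ built from the $t$-th column of $QKA(m,r)$, and the standard distinctness argument. Your observation that the two column sums of the quasi--Kotzig array are what double the three colour families to six is exactly the point, and your verification that $A_t\neq C_t\neq B_t$ ensures properness inside each copy.
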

\begin{proof}
    Using a similar proof technique as in Theorem \ref{2.4} by using a $QKA(m,r)$ (Construction given in Theorem \ref{2.2}) instead of $KA(m,r)$, it can be proved that $\chi_{la}(rK_{m,m})\leq 6.$
\end{proof}
\begin{example}
    Consider the graph $4K_{7,7}$. Here the matrices $M^*,W , U^t$'s and $Z^t$'s are given for $t = 1,2,3,4.$ \vskip 1 cm
    \begin{minipage}{7cm}
{
$$  M^* = \begin{bmatrix}
 30 & 39 & 48 & 9 & 10 & 19 & 28 \\
 38 & 47 & 7 & 17 & 18 & 27 & 29 \\
 46 & 6 & 8 & 25 & 26 & 35 & 37 \\
 5 & 14 & 16 & 33 & 34 & 36 & 45 \\
 13 & 15 & 24 & 41 & 42 & 44 & 4 \\
 21 & 23 & 32 & 49 & 43 & 3 & 12 \\
 22 & 31 & 40 & 1 & 2 & 11 & 20 \\

\end{bmatrix}$$}
\end{minipage}
\begin{minipage}{7cm}
$$QKA(7,4)= \begin{bmatrix}
 1 & 2 & 3 & 4 \\
3 & 1 & 4 & 2 \\
 3 & 4 & 1 & 2 \\
1 & 2 & 3 & 4 \\
 4 & 3 & 2 & 1 \\
 1 & 2 & 3 & 4 \\
 4 & 3 & 2 & 1 \\
\end{bmatrix}$$
\end{minipage}
\vskip 1 cm
 $  W = \begin{bmatrix}
 116 & 152 & 188 & 32 & 36 & 72 & 108 \\
148 & 184 & 24 & 64 & 68 & 104 & 112 \\
 180 & 20 & 28 & 96 & 100 & 136 & 144 \\
 16 & 52 & 60 & 128 & 132 & 140 & 176 \\
 48 & 56 & 92 & 160 & 164 & 172 & 12 \\
 80 & 88 & 124 & 192 & 168 & 8 & 44 \\
 84 & 120 & 156 & 0 & 4 & 40 & 76 \\
\end{bmatrix}$
\vskip 0.5 cm
\begin{minipage}{7cm}
{
$$U^1 = \begin{bmatrix}
 1 & 3 & 3 & 1 & 4 & 1 & 4  \\
 3 & 3 & 1 & 4 & 1 & 4 & 1  \\
 3 & 1 & 4 & 1 & 4 & 1 & 3  \\
 1 & 4 & 1 & 4 & 1 & 3 & 3  \\
 4 & 1 & 4 & 1 & 3 & 3 & 1  \\
 1 & 4 & 1 & 3 & 3 & 1 & 4 \\
 4 & 1 & 3 & 3 & 1 & 4 & 1 
\end{bmatrix}$$}
\end{minipage}
\begin{minipage}{8cm}
$$U^2 = \begin{bmatrix}
 2 & 1 & 4 & 2 & 3 & 2 & 3  \\
 1 & 4 & 2 & 3 & 2 & 3 & 2 \\
 4 & 2 & 3 & 2 & 3 & 2 & 1 \\
 2 & 3 & 2 & 3 & 2 & 1 & 4  \\
 3 & 2 & 3 & 2 & 1 & 4 & 2  \\
 2 & 3 & 2 & 1 & 4 & 2 & 3 \\
 3 & 2 & 1 & 4 & 2 & 3 & 2 
\end{bmatrix}$$
\end{minipage}
\vskip 1 cm
\begin{minipage}{7cm}
{
$$U^3 = \begin{bmatrix}
 3 & 4 & 1 & 3 & 2 & 3 & 2  \\
 4 & 1 & 3 & 2 & 3 & 2 & 3 \\
 1 & 3 & 2 & 3 & 2 & 3 & 4 \\
 3 & 2 & 3 & 2 & 3 & 4 & 1 \\
 2 & 3 & 2 & 3 & 4 & 1 & 3\\
 3 & 2 & 3 & 4 & 1 & 3 & 2  \\
 2 & 3 & 4 & 1 & 3 & 2 & 3  
\end{bmatrix}$$}
\end{minipage}
\begin{minipage}{8cm}
$$U^4 = \begin{bmatrix}
 4 & 2 & 2 & 4 & 1 & 4 & 1  \\
 2 & 2 & 4 & 1 & 4 & 1 & 4 \\
 2 & 4 & 1 & 4 & 1 & 4 & 2  \\
 4 & 1 & 4 & 1 & 4 & 2 & 2 \\
 1 & 4 & 1 & 4 & 2 & 2 & 4  \\
 4 & 1 & 4 & 2 & 2 & 4 & 1  \\
 1 & 4 & 2 & 2 & 4 & 1 & 4 
\end{bmatrix}$$
\end{minipage}

$$Z^1 = \begin{bmatrix}
 117 & 155 & 191 & 33 & 40 & 73 & 112 \\
151 & 187 & 25 & 68 & 69 & 108 & 113 \\
 183 & 21 & 32 & 97 & 104 & 137 & 147 \\
17 & 56 & 61 & 132 & 133 & 143 & 179 \\
 52 & 57 & 96 & 161 & 167 & 175 & 13 \\
81 & 92 & 125 & 195 & 171 & 9 & 48 \\
\end{bmatrix}$$

$$Z^2  = \begin{bmatrix}
 118 & 153 & 192 & 34 & 39 & 74 & 111 \\
 149 & 188 & 26 & 67 & 70 & 107 & 114 \\
 184 & 22 & 31 & 98 & 103 & 138 & 145 \\
 18 & 55 & 62 & 131 & 134 & 141 & 180 \\
 51 & 58 & 95 & 162 & 165 & 176 & 14 \\
 82 & 91 & 126 & 193 & 172 & 10 & 47 \\
 87 & 122 & 157 & 4 & 6 & 43 & 78 \\
\end{bmatrix}$$
$$Z^3 = \begin{bmatrix}
 119 & 156 & 189 & 35 & 38 & 75 & 110 \\
152 & 185 & 27 & 66 & 71 & 106 & 115 \\
 181 & 23 & 30 & 99 & 102 & 139 & 148 \\
 19 & 54 & 63 & 130 & 135 & 144 & 177 \\
 50 & 59 & 94 & 163 & 168 & 173 & 15 \\
 83 & 90 & 127 & 196 & 169 & 11 & 46 \\
 86 & 123 & 160 & 1 & 7 & 42 & 79 \\
\end{bmatrix}$$

$$Z^4 = \begin{bmatrix}
 120 & 154 & 190 & 36 & 37 & 76 & 109 \\
150 & 186 & 28 & 65 & 72 & 105 & 116 \\
 182 & 24 & 29 & 100 & 101 & 140 & 146 \\
 20 & 53 & 64 & 129 & 136 & 142 & 178 \\
 49 & 60 & 93 & 164 & 166 & 174 & 16 \\
 84 & 89 & 128 & 194 & 170 & 12 & 45 \\
 85 & 124 & 158 & 2 & 8 & 41 & 80 \\
\end{bmatrix}$$
Here, the matrix $M^*$ has row sums of 183 for the first six rows and 127 for the last row. Also, the column sums are 175. The column sums of $Z^1$ and $Z^2$ is 689 and for $Z^3$ and $Z^4$ is 690. The row sum of the first 6 columns of $Z^1$ and $Z^2$ is 721 and for $Z^3$ and $Z^4$ is 722. Also, the $7^{th}$ row sum of $Z^1$ and $Z^2$ is 497 and for $Z^3$ and $Z^4$ is 498. Thus $\chi_{la}(4K_{7,7}) = 6.$
\end{example}
Next, we characterize the local antimagic chromatic number of a complete tri-partite graph $K_{1,m,n}$. The local antimagic chromatic number of $K_{1,2,n}$ has already been discussed in \cite{lau}. 
\par The following terminology is used in the upcoming Theorems.  Assume that the graph $K_{1,m,n}$ has a vertex $x$ in first partition $X$, $m$ vertices $v
_1,v_2,\ldots,v_m$ in the second partition $U$ and $n$ vertices  $u_1,u_2,\ldots,u_n$ in third partition $W$ of the vertex set of the graph $K_{1,m,n}$. 

The procedure for labeling the graphs $K_{1,m,n}$ in the following theorems is given below. The same is followed in Theorem \ref{partite1}, \ref{partite2}, \ref{partite3}, and \ref{partite4}.

In each case, an $(m+1)\times (n+1)$ matrix $B=(b_{i,j})$ is constructed in such a way that the entry in the (1,1) position of the matrix is left blank. Then to label the edges $v_ix$ we use the entries $b_{i+1,1}$ from $B$ and to label the edges $u_jx$, we use the entries $b_{1,j+1}$ of the matrix $B.$ Then to label the edge $v_iu_j$, we use the entry $b_{i+1j+1}$, where  $1\leq i \leq m$ and $1 \leq j \leq n.$
\begin{theorem}\label{partite1}
Let  $m,n \geq 2$ with  $m \equiv n \bmod 2 $ and $ m \neq n$. Then $\chi_{la}(K_{1,m,n})=3.$
\end{theorem}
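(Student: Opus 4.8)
\emph{Proof proposal.} The lower bound is immediate: the vertices $x$, $v_1$, $u_1$ induce a triangle, so $\chi(K_{1,m,n})=3$, and since $\chi_{la}(G)\ge\chi(G)$ for every connected graph $G\ne K_2$, we get $\chi_{la}(K_{1,m,n})\ge 3$. For the matching upper bound I would use the $B$-matrix procedure described just before the theorem. First note that $|E(K_{1,m,n})|=m+n+mn=N-1$, where $N:=(m+1)(n+1)$, so the labels to be placed in the $(m+1)\times(n+1)$ array $B$ (with the $(1,1)$ cell left blank) are exactly $1,2,\dots,N-1$. With this labeling, $w(v_i)$ equals the sum of row $i+1$ of $B$, $w(u_j)$ equals the sum of column $j+1$ of $B$, and $w(x)$ equals the sum of the $m$ non-blank entries of column $1$ plus the sum of the $n$ non-blank entries of row $1$. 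Hence it suffices to fill $B$ so that rows $2,\dots,m+1$ share a common sum $\rho$, columns $2,\dots,n+1$ share a common sum $\sigma$, and $w(x)\notin\{\rho,\sigma\}$ with $\rho\ne\sigma$; the three colour classes are then $U=\{v_1,\dots,v_m\}$, $W=\{u_1,\dots,u_n\}$, $\{x\}$, and the colouring is proper because every edge of $K_{1,m,n}$ joins two of the sets $X,U,W$.

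The key idea is to build $B$ by deleting the largest entry, placed in a corner, of a magic rectangle. Since $m,n\ge 2$, $m\equiv n\pmod 2$ and $m\ne n$, the orders $m+1,n+1$ satisfy $m+1,n+1>1$, $m+1\equiv n+1\pmod 2$ and $(m+1)(n+1)\ge 9>4$, so Theorem \ref{MR} provides a magic rectangle $MR(m+1,n+1)$ with entries $1,\dots,N$, all row sums equal to $\rho=\frac{(n+1)(N+1)}{2}$ and all column sums equal to $\sigma=\frac{(m+1)(N+1)}{2}$. Permuting the rows and the columns of a magic rectangle again yields a magic rectangle, so I may assume that the entry $N$ sits in the $(1,1)$ cell. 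Deleting that entry gives the desired array $B$: each of $1,\dots,N-1$ occurs once, rows $2,\dots,m+1$ still sum to $\rho$, and columns $2,\dots,n+1$ still sum to $\sigma$.

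It then remains only to check that $\rho,\sigma,w(x)$ are pairwise distinct. From the construction, $w(x)=(\rho-N)+(\sigma-N)=\rho+\sigma-2N$. Now $\rho=\sigma$ forces $m+1=n+1$, contrary to $m\ne n$; $w(x)=\rho$ is equivalent to $\sigma=2N$, i.e.\ $(m+1)(N+1)=4N$, i.e.\ $(n+1)(3-m)=1$; and symmetrically $w(x)=\sigma$ is equivalent to $(m+1)(3-n)=1$. None of these can hold when $m,n\ge 2$: indeed $3-m$ and $3-n$ are integers at most $1$ while $n+1,m+1\ge 3$, so $(n+1)(3-m)$ and $(m+1)(3-n)$ are either $\le 0$ or $\ge 3$, never $1$. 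Thus the induced colouring uses exactly the three colours $\rho$, $\sigma$, $\rho+\sigma-2N$, which gives $\chi_{la}(K_{1,m,n})\le 3$, and together with the lower bound, $\chi_{la}(K_{1,m,n})=3$.

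The construction is short, and I do not expect a serious obstacle here. The only point that needs a little care is the bookkeeping that deleting the \emph{maximum} entry of the magic rectangle (rather than an arbitrary one) is exactly what converts the label multiset $\{1,\dots,N\}$ into the required $\{1,\dots,N-1\}$ while leaving the inner row and column sums untouched; after that, one just verifies the three small inequalities above. (The genuinely harder cases are presumably the opposite-parity ones in Theorems \ref{partite2}--\ref{partite4}, where $MR(m+1,n+1)$ does not exist and one must fall back on nearly magic rectangles or quasi-Kotzig arrays.)
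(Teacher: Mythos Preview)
Your proof is correct and follows essentially the same approach as the paper: both use an $MR(m+1,n+1)$ with a corner entry removed to build the labeling matrix $B$. The only cosmetic difference is that the paper brings the entry $1$ to the $(1,1)$ cell and subtracts $1$ from every entry (so the blank becomes $0$), whereas you bring the maximum entry $N$ to the $(1,1)$ cell and delete it; both yield the label set $\{1,\dots,N-1\}$ with rows $2,\dots,m+1$ and columns $2,\dots,n+1$ having constant sums. Your explicit verification that $\rho$, $\sigma$, and $w(x)=\rho+\sigma-2N$ are pairwise distinct is a welcome addition, since the paper asserts the conclusion without carrying out this check.
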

\begin{proof}
     For the graph $K_{1,m,n}$, it is clear that  $\chi(K_{1,m,n}) = 3$ and since $\chi_{la}(K_{1,m,n}) \geq \chi(K_{1,m,n}),$ we have  $\chi_{la}(K_{1,m,n}) \geq 3.$ 
    A new matrix $A=(a_{i,j})$ of order
    $(m+1)\times(n+1)$ is constructed using $MR(m+1,n+1)$, the existence of which is clear from Theorem \ref{MR}. We swap the columns and rows of the matrix $MR(m+1,n+1)$ to bring the entry 1 in $(1,1)^{th}$ position of the matrix and subtract 1 from every entry in the matrix $A$. Then, the $(1,1)^{th}$ position of the matrix is left blank. The resultant matrix so obtained is the required $B$ matrix. 
   %\par We can see that the sum of the entries in $MR(m+1,n+1)$ is $\frac{(m+1)(n+1)\left((m+1)(n+1) + 1\right)}{2}$ with column sums $\frac{(m+1)\left((m+1)(n+1) + 1\right)}{2}$ and row sums $\frac{(n+1)\left((m+1)(n+1) + 1\right)}{2}$. After creating the required matrix $B$ as mentioned above we can see that the row sums of the matrix $B$ is $\frac{(n+1)\left((m+1)(n+1) + 1\right)}{2}-(n+1)$ and column sum is $\frac{(m+1)\left((m+1)(n+1) + 1\right)}{2} - (m+1)$.
%\par Now, we can see  that the weight of the vertex $x$ is, $$(n+1)[\frac{\left((m+1)(n+1) + 1\right)}{2}-1]+ (m+1)[\frac{\left((m+1)(n+1) + 1\right)}{2} - 1].$$ Also the weights of the vertices in $U$ is a constant $(n+1)[\frac{\left((m+1)(n+1) + 1\right)}{2}-1]$ and the weights of the vertices in $W$ is $(m+1)[\frac{\left((m+1)(n+1) + 1\right)}{2} - 1]$. 
Thus $\chi_{la}(K_{1,m,n}) = 3.$ 
\end{proof}

\begin{theorem}\label{partite2}
Let  $m,n \geq 2$ with  $m \not \equiv n \bmod 2 $. Then $3\leq \chi_{la}(K_{1,m,n})\leq 4$.
\end{theorem}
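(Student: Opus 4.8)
The plan is to establish the lower bound exactly as in Theorem~\ref{partite1} and to obtain the upper bound by running the same labeling scheme, but with a nearly magic rectangle in place of the magic rectangle that no longer exists. For the lower bound, $\chi(K_{1,m,n})=3$, so $\chi_{la}(K_{1,m,n})\geq 3$. For the upper bound, since $K_{1,m,n}\cong K_{1,n,m}$ we may assume $m$ is odd and $n$ is even; then $m\geq 3$ and $n\geq 2$, so $m+1\geq 4$ is even and $n+1\geq 3$ is odd, and Theorems~\ref{near1} and~\ref{near2} supply a nearly magic rectangle $N=NMR(m+1,n+1)$: an $(m+1)\times(n+1)$ array on $\{1,\dots,(m+1)(n+1)\}$ all of whose column sums equal $\tfrac{(m+1)((m+1)(n+1)+1)}{2}$ and whose row sums take only the two consecutive values $\tfrac{(n+1)((m+1)(n+1)+1)\pm 1}{2}$, each occurring $(m+1)/2$ times.

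Next I would copy the construction preceding Theorem~\ref{partite1}: permute the rows and columns of $N$ to bring the entry $1$ into position $(1,1)$ (this keeps all column sums equal and only permutes the two row-sum values), subtract $1$ from every entry, and then delete the $(1,1)$ entry, which is now $0$; the resulting $(m+1)\times(n+1)$ array $B$ has entries precisely $1,\dots,(m+1)(n+1)-1$. Labeling $v_ix$ by $b_{i+1,1}$, $u_jx$ by $b_{1,j+1}$, and $v_iu_j$ by $b_{i+1,j+1}$, the induced weights are shifted row and column sums of $B$. Writing $P=(m+1)(n+1)-1$, one computes $w(u_j)=\tfrac{(m+1)P}{2}=:\rho$ for every $j$, $w(v_i)\in\bigl\{\tfrac{(n+1)P-1}{2},\,\tfrac{(n+1)P+1}{2}\bigr\}$ for every $i$, and $w(x)=\tfrac{(m+n+2)P+\varepsilon}{2}$ for some $\varepsilon\in\{-1,+1\}$ fixed by which row-sum value landed in the first row; a parity check (using $m$ odd and $n$ even) confirms all three are integers.

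It then remains to check that this is a proper colouring using at most four colours. Since $m\neq n$ (forced by $m\not\equiv n\bmod 2$) and $P\geq 5$, the difference $\rho-w(v_i)=\tfrac{(m-n)P\mp 1}{2}$ is nonzero, so the single colour $\rho$ of the $u_j$'s avoids the (at most two) colours of the $v_i$'s; and $w(x)-\rho=\tfrac{(n+1)P+\varepsilon}{2}>0$ together with $w(x)-w(v_i)=\tfrac{(m+1)P+\varepsilon-\varepsilon_i}{2}>0$ (as $(m+1)P\geq 20$ dominates $|\varepsilon-\varepsilon_i|\leq 2$) show that $w(x)$ is a distinct fourth colour. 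The only adjacencies in $K_{1,m,n}$ are $x\sim v_i$, $x\sim u_j$ and $v_i\sim u_j$, so the colouring is proper; its palette $\{\rho\}\cup\{w(v_i)\}\cup\{w(x)\}$ has at most $4$ elements, hence $\chi_{la}(K_{1,m,n})\leq 4$, which together with the lower bound gives $3\leq\chi_{la}(K_{1,m,n})\leq 4$.

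The step I expect to require genuine care is the bookkeeping in the middle paragraph: confirming that permuting, subtracting $1$, and deleting the corner entry produce exactly the stated row and column sums of $B$, and --- since we do not choose the cell holding $1$ --- that the argument is insensitive to which of the two row-sum values ends up in row $1$. The inequalities in the last paragraph were arranged to dispose of both possibilities at once, so no further case split is needed, and everything else is the routine arithmetic indicated above.
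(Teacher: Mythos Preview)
Your proposal is correct and follows essentially the same approach as the paper: both use an $NMR(m+1,n+1)$ in place of the magic rectangle from Theorem~\ref{partite1}, move the entry $1$ to the $(1,1)$ position, subtract $1$ everywhere, blank the corner, and label via the standard scheme. Your version is in fact more careful than the paper's, which simply asserts $\chi_{la}(K_{1,m,n})\le 4$ without writing out the weight computations or checking that $w(x)$, $\rho$, and the two $v$-weights are pairwise distinct; you also correctly note that only Theorems~\ref{near1} and~\ref{near2} are needed here (since $m+1\ge 4$), whereas the paper additionally cites Theorem~\ref{near0}.
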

\begin{proof}
   The proof is similar to that of Theorem \ref{partite1}. Clearly, $\chi_{la}(K_{1,m,n}) \geq 3$. Now, to label the graph $K_{1,m,n}$ we use an $NMR(m+1,n+1)$. Without loss of generality assume that $m$ is odd and $n$ is even.
   \par As in Theorem \ref{partite1}, a matrix $B$ of order
    $(m+1)\times(n+1)$ is constructed using $NMR(m+1,n+1)$, the existence of which is clear from Theorem \ref{near0},\ref{near1} and \ref{near2}. We swap the columns and rows of the matrix $NMR(m+1,n+1)$ to bring the entry 1 in $(1,1)^{th}$ position of the matrix and subtract 1 from every entry in the matrix. Then, the $(1,1)^{th}$ position of the matrix is left blank. The resultant matrix so obtained is the required $B$ matrix. Labeling is done in a similar fashion as in Theorem \ref{partite1}. Thus one can see that $\chi_{la}(K_{1,m,n})\leq 4.$
\end{proof}
\begin{theorem}\label{partite3}
    Let  $n\geq 3$ be odd. Then $\chi_{la}(K_{1,n,n})=3$.
\end{theorem}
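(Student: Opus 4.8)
The plan is to prove both inequalities. The lower bound $\chi_{la}(K_{1,n,n})\ge 3$ is immediate from $\chi_{la}\ge\chi$ and $\chi(K_{1,n,n})=3$. For the upper bound I would use the matrix scheme described just before Theorem \ref{partite1}: construct an $(n+1)\times(n+1)$ array $B=(b_{i,j})$ with rows and columns indexed $0,1,\dots,n$, the $(0,0)$-cell left blank, and the remaining $n^2+2n=|E(K_{1,n,n})|$ cells filled bijectively by $1,\dots,n^2+2n$, so that rows $1,\dots,n$ all have one common sum $\rho$, columns $1,\dots,n$ all have one common sum $\sigma$ with $\rho\ne\sigma$, and $\tau:=(\text{sum of row }0)+(\text{sum of column }0)\notin\{\rho,\sigma\}$. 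Labelling $v_ix$ by $b_{i,0}$, $u_jx$ by $b_{0,j}$ and $v_iu_j$ by $b_{i,j}$ then gives $w(v_i)=\rho$ for all $i$, $w(u_j)=\sigma$ for all $j$ and $w(x)=\tau$, so the induced colouring uses exactly the three colours $\rho,\sigma,\tau$ (adjacent vertices are always a $v_i$ and a $u_j$, a $v_i$ and $x$, or a $u_j$ and $x$), giving $\chi_{la}(K_{1,n,n})\le 3$.

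To produce such a $B$, I would place the $2n$ largest labels on the border: column $0$ (rows $1,\dots,n$) gets $\{n^2+1,\dots,n^2+n\}$, row $0$ (columns $1,\dots,n$) gets $\{n^2+n+1,\dots,n^2+2n\}$, and the central $n\times n$ block $B'$ (rows and columns $1,\dots,n$) gets $\{1,\dots,n^2\}$. Write $\mu=\tfrac12 n(n^2+1)$. Then $w(v_i)=b_{i,0}+(i\text{-th row sum of }B')$, so demanding all $w(v_i)$ equal forces the row sums of $B'$ to be $n$ consecutive integers; summing them against $\sum_{k=1}^{n^2}k$ pins them down to be $\mu-\tfrac{n-1}{2},\mu-\tfrac{n-3}{2},\dots,\mu+\tfrac{n-1}{2}$ (in some order), and then $\rho=\mu+n^2+\tfrac{n+1}{2}$. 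Symmetrically, the column sums of $B'$ must be that same block of $n$ consecutive integers, giving $\sigma=\rho+n$. Finally $\tau=\sum_{k=n^2+1}^{n^2+2n}k=2n^3+2n^2+n$, and $\tau>\sigma>\rho$ for $n\ge 2$, so $\rho,\sigma,\tau$ are distinct. Hence the whole theorem reduces to the following lemma: \emph{for every odd $n\ge3$ there is an $n\times n$ array using each of $1,\dots,n^2$ once whose row sums are $\mu-\tfrac{n-1}{2},\dots,\mu+\tfrac{n-1}{2}$ in some order and whose column sums are that same set of $n$ integers in some order.}

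To prove the lemma I would start from the De la Loub\`ere (Siamese) magic square $M$ of order $n$, which exists since $n$ is odd and lays out $1,2,\dots,n^2$ along broken diagonals. All of its row and column sums equal $\mu$, while the target deviations (for rows, and for columns) are the values $-\tfrac{n-1}{2},\dots,\tfrac{n-1}{2}$, whose positive part sums to $\binom{(n+1)/2}{2}$. The key structural fact is that for all but $n-1$ of the consecutive pairs $\{k,k+1\}$ the cells carrying $k$ and $k+1$ lie in distinct rows and distinct columns (they are cyclic ``up-and-right'' neighbours, so $k+1$ sits one row up and one column right of $k$); transposing the two entries of such a pair raises one row sum and one column sum by $1$, lowers one other row sum and one other column sum by $1$, and leaves the entry set unchanged. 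I would therefore select $\binom{(n+1)/2}{2}$ pairwise disjoint such consecutive pairs so that their combined effect realizes some permutation of $\{-\tfrac{n-1}{2},\dots,\tfrac{n-1}{2}\}$ on the row sums and, at the same time, some permutation of the same set on the column sums; carrying out all these transpositions turns $M$ into the array required by the lemma. (For $n=3$ the single swap $1\leftrightarrow 2$ already works; for $n=5$ the three swaps $8\leftrightarrow 9$, $17\leftrightarrow 18$, $4\leftrightarrow 5$ work.) The main obstacle is exactly this last selection step: one must verify that the spiralling broken-diagonal layout of $M$ contains disjoint consecutive pairs with the prescribed row-shift profile \emph{and} the prescribed column-shift profile simultaneously. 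Here one exploits that every admissible pair shifts the row index by $-1$ and the column index by $+1$ cyclically, so the problem becomes a combinatorial matching of the ``row-types'' and ``column-types'' of the chosen pairs; settling it in general will likely require a short case analysis on $n\bmod 4$ or an explicit recursive description of the family of swaps. Once the lemma is secured, the remaining verifications — that $B$ uses $1,\dots,n^2+2n$ bijectively and that $\rho,\sigma,\tau$ are pairwise distinct — are the routine arithmetic indicated above.
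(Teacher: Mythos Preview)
Your reduction is clean and the arithmetic with $\rho,\sigma,\tau$ is correct, but the argument has a genuine gap at exactly the point you flag: the ``lemma'' asserting an $n\times n$ array on $\{1,\dots,n^2\}$ whose row sums and column sums are each a permutation of $\mu-\tfrac{n-1}{2},\dots,\mu+\tfrac{n-1}{2}$ is never actually proved. You verify it by hand for $n=3,5$ and then say that ``settling it in general will likely require a short case analysis on $n\bmod 4$ or an explicit recursive description of the family of swaps.'' That is precisely the missing idea. In the Siamese square every admissible swap $k\leftrightarrow k+1$ has the \emph{same} cyclic row–shift type $(r,r-1)$ and column–shift type $(c,c+1)$, so achieving the prescribed deviation profile on rows and on columns \emph{simultaneously} with pairwise disjoint pairs is a genuine design constraint, not a counting one; nothing you have written forces such a selection to exist for all odd $n$, and you give no construction. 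Until that selection is produced, the upper bound $\chi_{la}(K_{1,n,n})\le 3$ is unproved.

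By contrast, the paper's proof sidesteps this difficulty entirely with an explicit $(n+1)\times(n+1)$ array. It fills the columns in a zig--zag pattern so that all $n+1$ row sums are equal from the outset, and then it equalises the sums of columns $2,\dots,n+1$ by swapping, for each pair of columns $(j,\,n+3-j)$, exactly $\tfrac{n+1}{2}$ entries between them within the same rows; since the within-row difference is a constant $4s(s-j+1)$, one computes the new column sums directly and checks they all equal $\tfrac{n+1}{2}(n^2+3n+1)$. No auxiliary lemma about nearly-magic squares is needed. If you want to salvage your approach, you must actually exhibit the family of swaps (or give some other construction of the interior block) for every odd $n$; otherwise, the paper's direct construction is the simpler route.
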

\begin{proof}
    To label the graph $K_{1,n,n}$, construct an $(n+1) \times (n+1)$ matrix $A=(a_{i,j})$ as follows.
    \begin{center}$
    \begin{bmatrix}
        1&2n+2&2n+3&4n+4&...&n^2&(n+1)^2\\
        2&2n+1&2n+4&4n+3&...&n^2+1&(n+1)^2-1\\
        .&.&.&.&...&.&.\\
        .&.&.&.&...&.&.\\
        .&.&.&.&...&.&.\\
        n& n+3&3n+2&3n+5&...&n^2+n-1&(n+1)^2-(n-1)\\
        n+1&n+2&3n+3&3n+4&...&n^2+n&(n+1)^2-n
    \end{bmatrix}$
    \end{center}
    % For that we write the $(n+1)^2$ consecutive integers from $1$ to $(n+1)^2$ with first column having integers $1,2,...,n+1$; second column having integers $2n+2,2n+1,...,n+2$; third column having integers $2n+3,2n+4,...,3n+3$; fourth column having integers $4n+4,4n+3,...,3n+4$; and so on, $(n-1)-$th column having integers $n^2,n^2+1,...,n^2+n$; $n-$th column having integers $(n+1)^2,(n+1)^2-1,...,(n+1)^2-n$. This is a serpentine format for writing the  first $(n+1)^2$ consecutive integers in $n+1$ columns.
    One can also see that the elements in the $i^{th}$ row of the matrix $A$ is of form $i,4s+1-i,4s+i,8s+1-i,8s+i,...,4s(s-1)+i,4s^2+1-i$, where $s=\frac{n+1}{2}$ and $1 \leq i \leq n+1$. Here the row sums are all a single constant ${\frac{(n+1)}{2}\left[\frac{4n^2+8n+11}{2}\right]}$.
    \par Subtract one from each entry of the matrix $A$. Now the $k^{th}$ column sum of the updated matrix is $(n+1)\frac{(n+2)+2(k-1)(n+1)-2}{2}$ where $1\leq k\leq n+1$. Now the sum of the $(\frac{n+3}{2})^{th}$ column entries is $\left(\frac{n+1}{2}\right)\left(n^2+3n+1\right)$. Further, do $s$ swaps for each pair of columns mainly $j^{th}$ and $(n+3-j)^{th}$ one, where $2\leq j\leq \frac{n+1}{2}$.
    The swapping is done in such a way that the entries $a_{i,j}$'s are swapped with $a_{i,n+3-j}$,(total $s$ of such swaps).  Here the difference between the $j^{th}$ and $(n+3-j)^{th}$ entries in $i^{th}$ row is $4s\left(s-j+1\right)$. Thus  $4s^2\left(s-j+1\right)$ is added to the $j^{th}$ column sum and $4s^2\left(s-j+1\right)$ is subtracted from $(n+3-j)^{th}$ column sum. That is,
    \begin{eqnarray}
    \frac{(n+1)\left[(n+2)+2(j-1)(n+1)-2\right]}{2} + 4s^2\left(n-j+1\right)
    \end{eqnarray}
    and 
    \begin{eqnarray}
    \frac{(n+1)\left[(n+2)+2(n-j+2)(n+1)-2\right]}{2}-4s^2\left(n-j+1\right)
    \end{eqnarray}
Both of which are equal to $\left(\frac{n+1}{2}\right)\left(n^2+3n+1\right)$ the required column sum. The resultant matrix is the required $B$ matrix which can be used to label the graph $K_{1,n,n}$. Thus we get $\chi_{la}(K_{1,n,n})=3$.
\end{proof}
\begin{theorem}\label{partite4}
    Let $m\geq 2$ be an even. Then $3\leq 
    \chi_{la}(K_{1,m,m})\leq 4$. 
\end{theorem}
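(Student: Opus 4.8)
The lower bound is immediate: $K_{1,m,m}$ contains a triangle, so $\chi(K_{1,m,m})=3$, and $\chi_{la}(G)\ge\chi(G)$ for every connected graph $G\ne K_2$; hence $\chi_{la}(K_{1,m,m})\ge 3$.

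For the upper bound I would use the matrix-labelling scheme fixed just before Theorem \ref{partite1}: construct an $(m+1)\times(m+1)$ array $B=(b_{i,j})$ whose $(1,1)$-cell is blank and whose remaining cells carry $1,2,\ldots,(m+1)^2-1$, and label $v_ix$ by $b_{i+1,1}$, $u_jx$ by $b_{1,j+1}$, and $v_iu_j$ by $b_{i+1,j+1}$. Then $w(v_i)$ is the sum of row $i+1$ of $B$, $w(u_j)$ is the sum of column $j+1$, and $w(x)$ equals the sum of the filled cells of row $1$ plus the sum of the filled cells of column $1$. So it is enough to construct a $B$ in which rows $2,\ldots,m+1$ all have one common sum $\rho$, columns $2,\ldots,m+1$ realise at most two sums $\sigma_1,\sigma_2$, and $\rho,\sigma_1,\sigma_2,w(x)$ are pairwise distinct: then adjacent vertices receive distinct colours and at most these four colours occur, so $\chi_{la}(K_{1,m,m})\le 4$. (One of the two parts of size $m$ must be kept single-valued: if both $\{w(v_i)\}$ and $\{w(u_j)\}$ contained two values, those four values would all have to be distinct, since $v_iu_j$ is an edge, and $w(x)$ a fifth.)

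The point that needs care is that the naive symmetric construction fails. Taking a magic rectangle $MR(m+1,m+1)$ (which exists by Theorem \ref{MR}, since $m+1\ge 3$), moving $1$ into the $(1,1)$-cell by row and column swaps, subtracting $1$ from every entry and blanking that cell, yields a $B$ in which \emph{all} of rows $2,\ldots,m+1$ and columns $2,\ldots,m+1$ have the same sum, forcing $w(v_i)=w(u_j)$ on the edge $v_iu_j$; this is exactly the obstruction behind the hypothesis $m\ne n$ in Theorem \ref{partite1}. So the symmetry between the two size-$m$ parts has to be broken deliberately, keeping one side perfectly regular. I would do this in the spirit of the proof of Theorem \ref{2.4}: take an inner $m\times m$ block of the form $W+U$, where $W$ is a copy of a magic rectangle $MR(m,m)$ (available for even $m\ge4$ by Theorem \ref{MR}) with its entries spread apart by a factor larger than the spread of $U$, and $U$ is built from a Kotzig-type array on $m$ rows (a Kotzig array $KA(m,\cdot)$, available for even $m$ by Theorem \ref{kotzig1}, or a quasi-Kotzig array as in Theorem \ref{2.2} when a two-valued column pattern is wanted); one then chooses the $m$ labels placed in column $1$ so that each cancels the discrepancy of the corresponding inner row sum — making all $w(v_i)$ equal — and the $m$ labels in row $1$ so that they merely push the column sums into a two-element set. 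The remaining small case $m=2$ (where $MR(2,2)$ does not exist) is the case $n=2$ of $K_{1,2,n}$, which is treated in \cite{lau}.

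The bulk of the work, and the main obstacle, is this simultaneous bookkeeping: the scaling factor, the Kotzig-type array, and the placement of the $2m$ border labels must be coordinated so that at once (i) all $(m+1)^2-1$ entries are distinct, (ii) rows $2,\ldots,m+1$ have a single common sum, (iii) columns $2,\ldots,m+1$ take only two values, and (iv) those two values together with the common row sum and $w(x)$ are pairwise different. Distinctness of the entries should follow from the gap argument used in Theorem \ref{2.4} — any two inner entries differ by at least the scaling factor, which dominates the spread of $U$, while the border labels occupy the top of the label range — and once the scaling and border labels are fixed, (ii)–(iv) reduce to a handful of parity and divisibility checks in $m$.
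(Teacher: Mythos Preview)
Your lower bound and your diagnosis of why the naive magic-square approach fails are both correct, and your description of what the target array $B$ must satisfy is exactly right. But the construction you sketch has an internal inconsistency, and the paper's route is much simpler.

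The gap is this. You build the inner $m\times m$ block as $W+U$ with $W$ a scaled $MR(m,m)$ and $U$ a Kotzig-type array, and then say you will ``choose the $m$ labels placed in column $1$ so that each cancels the discrepancy of the corresponding inner row sum''. But an $MR(m,m)$ has all row sums equal, and a Kotzig array (each row a permutation of $1,\ldots,b$) also has all row sums equal; the circulant $U^t$ matrices of Theorems \ref{2.2} and \ref{2.4} likewise have constant row sums. So $W+U$ has \emph{no} row discrepancies to cancel. The $m$ border labels in column $1$ are distinct integers, and adding distinct integers to a constant inner row sum produces $m$ distinct values of $w(v_i)$, not one. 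The same problem hits the columns: to keep columns $2,\ldots,m+1$ at only two values you would need the $m$ distinct row-$1$ labels to match a prescribed two-valued pattern, which distinct integers cannot do. To salvage the plan you would need an inner block whose row sums vary in a pattern exactly matched by $m$ of the available labels, and you have not indicated how to arrange this. The Kotzig machinery of Theorems \ref{2.2} and \ref{2.4} was designed to distribute labels across $r$ copies; with a single copy it has no natural role.

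The paper sidesteps all of this by working with the full $(m+1)\times(m+1)$ array from the start and breaking the symmetry \emph{inside row $1$}. Since $m+1$ is odd, the explicit magic square of Remark \ref{oddrk} is available, and in that specific construction the first-row entries in columns $j$ and $j+1$ differ by exactly $m$ for each even $j\in\{2,4,\ldots,m\}$. Swapping those $\tfrac{m}{2}$ pairs within row $1$ leaves every row sum unchanged but shifts each of columns $2,\ldots,m+1$ by $\pm m$, yielding exactly two column sums among those columns while column $1$ keeps the magic sum. Subtracting $1$ from every entry makes the $(1,1)$-cell zero; blanking it gives $B$. This handles all even $m\ge 2$ uniformly (no separate appeal to \cite{lau} for $m=2$ is needed), and the four weights $\rho,\sigma_1,\sigma_2,w(x)$ are seen to be pairwise distinct by a short calculation.
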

\begin{proof}
The lower bound is obvious from the fact that the $\chi(K_{1,m,m})=3$. Now to get the upper bound, one can proceed as follows.
    \par First construct a magic square $A=(a_{i,j})$ of odd order $(m+1)\times (m+1)$, as in Remark \ref{oddrk} by taking $m+1 = p$. Then, the row sum and column sum of $A$ are $\frac{p(p^2+1)}{2}$. Now, swap the first entry of $j^{\text{th}} $ column with the first entry of $(j+1)^{\text{th}}$ column for $j = 2,4,6,...,p-1$. It can be seen that the difference between these swapped elements is $p-1$. Thus all the $j$ columns, where $j = 2,4,6,..,p-1,$ have a column sum of  $\frac{p(p^2+1)}{2}-(p-1)$ and all the remaining columns except the first column have a sum of $\frac{p(p^2+1)}{2}+(p-1)$. It can be noted that the row sum is unaffected by these swappings. Now, subtract one from each matrix element for a new matrix $B$, having its $(1,1)^{\text{th}}$ entry 0.
    \par Observe that the matrix $B$ has equal row sums $\frac{p(p^2+1)}{2}-p$, and three different column sums $\frac{p(p^2+1)}{2}-p-(p-1)$, $\frac{p(p^2+1)}{2}-p+(p-1)$ and $\frac{p(p^2+1)}{2}-p$ with $\frac{p(p^2+1)}{2}-p$ being the column sum of the first column alone. Thus $B $ can be used to label the graph $K_{1,m,m}$, which gives $\chi_{la}(K_{1,m,m})\leq 4$.
\end{proof}
\section{Conclusion and scope}
In this paper, the local antimagic chromatic number of a complete tripartite graph $K_{1,m,n}$ is determined. Further, the local antimagic chromatic number of $r$ copies of a complete bipartite graph $K_{m,n}$ is also determined, except for the case when $m$ and $n$ are of different parity. Thus the following problem is of interest.
\begin{problem}
    Determine the local antimagic chromatic number of $rK_{m,n}$ where $r \geq 2$ and $m \not \equiv n \bmod 2$.
\end{problem}
%\bibliographystyle{unsrt}
%\bibliography{references.bib}
%----------------------------------------

%----------------
\end{document}